\numberwithin{equation}{section}
\let\a=\alpha
\let\d=\delta
\let\la=\lambda
\let\om=\omega
\def\l{\lambda}
\def\R{\mathbb R}
\def\eqdef{\buildrel\hbox{\footnotesize def}\over =}
\newcommand{\ba}{\begin{array}}
\newcommand{\ea}{\end{array}}
\newcommand{\beq}{\begin{equation}}
\newcommand{\eeq}{\end{equation}}
\newcommand{\ben}{\begin{eqnarray}}
\newcommand{\een}{\end{eqnarray}}
\newcommand{\beno}{\begin{eqnarray*}}
\newcommand{\eeno}{\end{eqnarray*}}
\newtheorem{theorem}{Theorem}[section]
\newtheorem{definition}[theorem]{Definition}
\newtheorem{lemma}[theorem]{Lemma}
\begin{document}
\begin{CJK*}{UTF8}{gkai}
\title[Diffusion and mixing in fluid flow via the resolvent estimate]{Diffusion and mixing in fluid flow via the resolvent estimate}

\author{Dongyi Wei}
\address{School of Math Sciences and BICMR, Peking University, 100871, Beijing, P. R. China}
\email{jnwdyi@163.com}

\date{\today}

\maketitle

\begin{abstract}

In this paper, we first present a Gearhardt-Pr\"uss type theorem with a sharp bound for m-accretive operators. Then we give two applications: (1) give a simple proof of the result proved by Constantin et al. on relaxation enhancement induced by incompressible flows; (2) show that shear flows with a class of Weierstrass functions obey logarithmically fast dissipation time-scales.
\end{abstract}

\section{Introduction}

Let $X$ be a complex Hilbert space. We denote by $\|\cdot\|$ the norm and  by $\langle,\rangle$ the inner product. Let $H$ be a linear operator in $X$ with the domain $D(H)$.  The Hille-Yosida theorem gives a necessary and sufficient condition so that $H$ generates a strongly continuous semigroup $S(t)=e^{tH}$. We say that $S(t)$ satisfies $P(M,\om)$ if
\beno
\|S(t)\|\le Me^{\om t}, \quad t\ge 0.
\eeno

\begin{theorem}[Hille-Yosida theorem]
Let $H$ be a linear operator in $X$ with the domain $D(H)$. Let $\om \in \R, M>0$. Then $H$ generates a strongly continuous semigroup $S(t)=e^{tH}$ satisfying $P(M,\om)$ if and only if
\begin{itemize}

\item[1.] $H$ is closed and $D(H)$ is dense in $X$;

\item[2.]  For all $\lambda>\om$, $\lambda$ belongs to the resolvent set $\rho(H)$ of
$H$, and for all positive integers $n$,
\beno
\big\|(\lambda-H)^{-n}\big\|\le \frac M {(\lambda-\om)^n}.
\eeno
\end{itemize}
\end{theorem}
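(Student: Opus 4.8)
The plan is to prove the two implications separately, treating the easier necessity direction via the Laplace transform and devoting the bulk of the effort to the sufficiency direction, which is where the full family of power estimates is genuinely needed.

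For necessity, suppose $H$ generates a $C_0$-semigroup $S(t)$ with $\|S(t)\|\le Me^{\omega t}$. That $H$ is closed and densely defined is a standard structural fact about generators; I would recover density by noting that the averaged vectors $\frac1t\int_0^t S(s)x\,ds$ lie in $D(H)$ and converge to $x$ as $t\to0^+$, and closedness from the fundamental identity $S(t)x-x=\int_0^t S(s)Hx\,ds$ for $x\in D(H)$. For the resolvent claim, the key is that for $\lambda>\omega$ the Laplace transform
\[
R(\lambda)x:=\int_0^\infty e^{-\lambda t}S(t)x\,dt
\]
converges absolutely (since $\|e^{-\lambda t}S(t)\|\le Me^{(\omega-\lambda)t}$) and defines $(\lambda-H)^{-1}$. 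Differentiating $n-1$ times in $\lambda$ under the integral gives
\[
(\lambda-H)^{-n}x=\frac{1}{(n-1)!}\int_0^\infty t^{n-1}e^{-\lambda t}S(t)x\,dt,
\]
and the elementary Gamma integral $\int_0^\infty t^{n-1}e^{(\omega-\lambda)t}\,dt=(n-1)!/(\lambda-\omega)^n$ yields exactly $\|(\lambda-H)^{-n}\|\le M/(\lambda-\omega)^n$.

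For sufficiency, I would build the semigroup from the Yosida approximation. Writing $R_\lambda=(\lambda-H)^{-1}$, set $H_\lambda=\lambda^2R_\lambda-\lambda I=\lambda HR_\lambda$, which is bounded and therefore generates the uniformly continuous semigroup $e^{tH_\lambda}$. Two preliminary limits are needed: $\lambda R_\lambda x\to x$ and $H_\lambda x\to Hx$ as $\lambda\to\infty$ for $x\in D(H)$, proved first on $D(H)$ using the first-power bound and then extended by density. The decisive use of the hypothesis is the uniform growth estimate: expanding
\[
\|e^{tH_\lambda}\|=e^{-\lambda t}\Big\|\sum_{n=0}^\infty\frac{(t\lambda^2)^n}{n!}R_\lambda^n\Big\|\le e^{-\lambda t}\sum_{n=0}^\infty\frac{(t\lambda^2)^n}{n!}\frac{M}{(\lambda-\omega)^n}=M\exp\!\Big(\frac{t\lambda\omega}{\lambda-\omega}\Big),
\]
so that $\|e^{tH_\lambda}\|\le Me^{\omega't}$ with $\omega'=\lambda\omega/(\lambda-\omega)\to\omega$. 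This is precisely the point at which the bounds on all powers $R_\lambda^n$, not merely on $R_\lambda$, are indispensable when $M>1$.

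Finally I would show the approximants converge. For $x\in D(H)$, the telescoping identity
\[
e^{tH_\lambda}x-e^{tH_\mu}x=\int_0^t e^{(t-s)H_\mu}e^{sH_\lambda}(H_\lambda-H_\mu)x\,ds,
\]
valid since all these operators are functions of commuting resolvents, combined with the uniform bounds shows that $\{e^{tH_\lambda}x\}$ is Cauchy, uniformly on compact $t$-intervals; density together with uniform boundedness then extends the convergence to every $x\in X$. The limit $S(t)$ is checked to be a $C_0$-semigroup satisfying $P(M,\omega)$, and a short argument identifies its generator with $H$, using that $\lambda\in\rho(H)$ for $\lambda>\omega$ forces the two operators to coincide. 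The main obstacle throughout is the sufficiency direction, and within it the honest bookkeeping of the $\lambda\to\infty$ limit in the growth estimate above, since everything downstream depends on the uniform-in-$\lambda$ control of $\|e^{tH_\lambda}\|$.
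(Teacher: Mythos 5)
Your proposal is correct, but note that the paper itself contains no proof of this statement: the Hille--Yosida theorem is quoted there as classical background (with a pointer to \cite{EN}), so there is no in-paper argument to compare against. On its own merits, your argument is a sound and standard proof of the general $(M,\omega)$ form (often attributed to Feller, Miyadera and Phillips). The necessity direction via the Laplace transform and the identity $(\lambda-H)^{-n}x=\frac{1}{(n-1)!}\int_0^\infty t^{n-1}e^{-\lambda t}S(t)x\,dt$ is exactly right, and your sufficiency argument via the Yosida approximants $H_\lambda=\lambda^2R_\lambda-\lambda I$ is complete: the computation $\|e^{tH_\lambda}\|\le Me^{t\lambda\omega/(\lambda-\omega)}$ is where the bounds on \emph{all} powers $R_\lambda^n$ enter (for $M>1$ one cannot telescope first-power bounds, since $\|R_\lambda^n\|\le M^n/(\lambda-\omega)^n$ would destroy the exponent), and you correctly flag this; the telescoping identity is legitimate because $H_\lambda$ and $H_\mu$ commute as functions of commuting resolvents, and the generator identification via $B\supseteq H$ together with $\lambda\in\rho(H)\cap\rho(B)$ for $\lambda>\omega$ is the standard closing step. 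For comparison, the treatment in the reference the paper cites handles the case $M>1$ differently: it renorms $X$ with the equivalent norm $|x|=\sup_{n\ge0,\,\mu>\omega}\|(\mu-\omega)^n(\mu-H)^{-n}x\|$, under which the shifted semigroup becomes a contraction, and then invokes the contraction ($M=1$) case. The renorming route buys a cleaner reduction at the cost of an auxiliary norm construction; your direct approximation route keeps everything in the original norm at the cost of the $\lambda$-uniform bookkeeping you describe. Either is acceptable; your write-up has no gaps beyond the routine verifications you explicitly defer.
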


However, it is not easy to check the second property for all powers of the resolvent.
In the special case of $M=1, \om=0$(i.e., $S(t)$ is a contraction semigroup), it is enough to check that for any $\lambda>0$,
\beno
\big\|(\lambda-H)^{-1}\big\|\le \frac 1 {\lambda}.
\eeno
Gearhart-Pr\"uss theorem gives the semigroup bound via the resolvent estimate.

\begin{theorem}[Gearhart-Pr\"uss theorem]
Let $H$ be a closed operator with a dense domain $D(H)$ generating a strongly continuous semigroup $e^{tH}$. Assume that $\|(z-H)^{-1}\|$
is uniformly bounded for ${\rm{Re}}\ \!z\ge \om$. Then there exists $M>0$
so that $e^{tH}$ satisfies $P(M,\om)$.
\end{theorem}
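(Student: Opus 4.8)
The plan is to reduce to the case $\om=0$ and then combine the Plancherel theorem (this is where the Hilbert space structure is essential, and why the statement fails for general Banach spaces) with a duality argument. First I would set $A=H-\om$, so that $e^{tA}=e^{-\om t}e^{tH}$ and $(z-A)^{-1}=(z+\om-H)^{-1}$ is uniformly bounded, say by a constant $C$, on $\{\mathrm{Re}\,z\ge 0\}$; since $\|e^{tH}\|=e^{\om t}\|e^{tA}\|$, the conclusion $P(M,\om)$ is equivalent to the boundedness $\|e^{tA}\|\le M$. Writing $T(t):=e^{tA}$, I note that as a $C_0$-semigroup it already obeys $\|T(t)\|\le M_0e^{\om_0 t}$ for some $M_0\ge 1$, $\om_0>0$, and that on a Hilbert space its adjoint $T(t)^*$ is again a $C_0$-semigroup, generated by $A^*$, with resolvent $(z-A^*)^{-1}=\big((\bar z-A)^{-1}\big)^*$ also bounded by $C$ on $\{\mathrm{Re}\,z\ge 0\}$.

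The core estimate I would aim for is the pair of $L^2$-in-time bounds
\beq
\int_0^\infty \|T(t)x\|^2\, dt \le K\|x\|^2, \qquad \int_0^\infty \|T(t)^* y\|^2\, dt \le K\|y\|^2 .
\eeq
For $\epsilon>\om_0$, the vector-valued Plancherel theorem applied to $t\mapsto e^{-\epsilon t}T(t)x\,\mathbf 1_{t\ge 0}$, whose Fourier transform is $\tau\mapsto (\epsilon+i\tau-A)^{-1}x$, gives
\[
\int_0^\infty e^{-2\epsilon t}\|T(t)x\|^2\, dt = \frac{1}{2\pi}\int_{\R}\|(\epsilon+i\tau-A)^{-1}x\|^2\, d\tau .
\]

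The hard part, and the only genuinely delicate point, is that the bare bound $\|(\epsilon+i\tau-A)^{-1}\|\le C$ is a constant in $\tau$ and hence not integrable, so it cannot be fed into the identity above as $\epsilon\downarrow 0$. To overcome this I would fix $\epsilon_0>\om_0$ and use the resolvent identity $(\epsilon+i\tau-A)^{-1}=(\epsilon_0+i\tau-A)^{-1}+(\epsilon_0-\epsilon)(\epsilon+i\tau-A)^{-1}(\epsilon_0+i\tau-A)^{-1}$ together with $\|(\epsilon+i\tau-A)^{-1}\|\le C$ to get the pointwise bound $\|(\epsilon+i\tau-A)^{-1}x\|\le (1+\epsilon_0 C)\|(\epsilon_0+i\tau-A)^{-1}x\|$. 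Integrating in $\tau$ and using Plancherel at the fixed level $\epsilon_0$ — where the time integral converges because $\epsilon_0>\om_0$ — bounds the right-hand side by a constant multiple of $\|x\|^2$ uniformly in $\epsilon\in(0,\epsilon_0]$; monotone convergence as $\epsilon\downarrow 0$ then yields the first inequality, and the identical argument for $T^*$ gives the second.

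Finally I would close with a duality/convolution step. For $x,y\in X$ and $t>0$, writing $T(t)=T(t-s)T(s)$ and integrating the $s$-independent quantity $\langle T(t)x,y\rangle$ over $s\in(0,t)$,
\[
t\,\langle T(t)x,y\rangle = \int_0^t \langle T(s)x,\ T(t-s)^*y\rangle\, ds ,
\]
so Cauchy--Schwarz and the two $L^2$ bounds give $t\,|\langle T(t)x,y\rangle|\le K\|x\|\,\|y\|$, that is $\|T(t)\|\le K/t$. Hence $\|T(t)\|\le K$ for $t\ge 1$, while $\|T(t)\|\le M_0e^{\om_0}$ for $0\le t\le 1$, so $\|T(t)\|\le M:=\max(K,M_0e^{\om_0})$ for all $t\ge 0$. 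Undoing the rescaling $A=H-\om$ gives $P(M,\om)$ for $e^{tH}$.
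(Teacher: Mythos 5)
First, a point of reference: the paper never proves this statement --- it is quoted as classical background with a pointer to \cite{EN}, and the only Plancherel-type argument in the paper is the Section~2 proof of Theorem~\ref{thm:GP} for m-accretive (hence contraction) semigroups. Your architecture is the standard Hilbert-space proof of Gearhart--Pr\"uss: the reduction to $\om=0$, the passage to the adjoint semigroup, the resolvent identity giving $\|(\epsilon+i\tau-A)^{-1}x\|\le(1+\epsilon_0C)\|(\epsilon_0+i\tau-A)^{-1}x\|$ uniformly for $\epsilon\in(0,\epsilon_0]$, and the convolution/duality step $t\langle T(t)x,y\rangle=\int_0^t\langle T(s)x,T(t-s)^*y\rangle\,ds$ are all correct, and you correctly locate where the Hilbert structure enters.

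There is, however, a genuine gap exactly at the point you flagged as delicate, and your resolvent-identity trick does not close it. What that trick yields is a \emph{frequency-side} bound, $\int_{\R}\|(\epsilon+i\tau-A)^{-1}x\|^2\,d\tau\le K\|x\|^2$ uniformly in $\epsilon\in(0,\epsilon_0]$. To feed monotone convergence you need the \emph{time-side} bound $\int_0^\infty e^{-2\epsilon t}\|T(t)x\|^2\,dt\le K'\|x\|^2$ for small $\epsilon$, and the bridge between the two is the Plancherel identity at level $\epsilon$ --- which you established only for $\epsilon>\om_0$. For $\epsilon\le\om_0$ the function $t\mapsto e^{-\epsilon t}T(t)x\,\mathbf 1_{t\ge0}$ is not known to lie in $L^2$, its Fourier transform is not defined, and the Laplace representation of the resolvent diverges; Plancherel cannot be run ``backwards'' from a bound on a candidate transform to square-integrability of the function. (Truncating to $[0,N]$ makes the obstruction visible: the transform of the truncation is $(\epsilon+i\tau-A)^{-1}x-e^{-(\epsilon+i\tau)N}T(N)(\epsilon+i\tau-A)^{-1}x$, and the error carries $e^{-\epsilon N}\|T(N)\|$, which blows up when $\epsilon<\om_0$.) The gap is reparable by a known extra step, e.g.: run your duality estimate at each admissible level, getting $t\|T(t)\|\le Ke^{\epsilon t}$ for every $\epsilon$ strictly above the current growth bound, then note via submultiplicativity at $t_0=2K$ that $\|T(t_0)\|\le\frac12e^{\epsilon t_0}$ lowers the growth bound by the fixed amount $\ln 2/(2K)$; finitely many iterations bring it to $\le 0$, after which $\epsilon\downarrow0$ is legitimate (alternatively invoke Datko's theorem at the infimal level). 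It is worth noting that the paper's own device in the proof of Theorem~\ref{thm:GP} --- multiplying the orbit by a compactly supported $C^1$ cutoff $\chi$ \emph{before} transforming, so that $f_1=\chi T(\cdot)x$, $f_2=\chi'T(\cdot)x$ are automatically in $L^2$ and $\widehat{f_2}(\tau)=(i\tau-A)\widehat{f_1}(\tau)$ with unconditional Plancherel --- is designed precisely to sidestep this issue; in the contraction setting the monotonicity of $\|T(t)x\|$ then closes the argument with a sharp constant, whereas your half-line transform forces the $\epsilon\downarrow0$ limit that, as written, is unjustified.
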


Let us refer to \cite{EN} for more introductions. \smallskip

Recently, Helffer and Sj\"{o}strand presented a quantitative version of Gearhart-Pr\"uss theorem and gave some interesting applications
to complex Airy operator, complex harmonic oscillator and Fokker-Planck operator \cite{HS}. Motivated by their work,
we first present a Gearhart-Pr\"uss type theorem with a sharp bound for m-accretive operators.
A closed operator $H$ in a Hilbert space $X$ is called m-accretive if the left open half-plane is contained in the resolvent set $\rho(H)$ with
\beno
(H+\lambda)^{-1}\in\mathcal{B}(X),\quad \|(H+\lambda)^{-1}\|\leq ({\rm Re}\lambda)^{-1}\quad \text{for}\ {\rm Re }\lambda>0.
\eeno
Here $\mathcal{B}(X)$ is the set of bounded linear operators on $X$.
An m-accretive operator $H$ is accretive and densely defined (see section V-3.10 in \cite{Kato}), $i.e., {D}(H) $ is dense in $X$ and ${\rm Re} \langle Hf,f\rangle\geq 0$ for $f\in {D}(H),$ and $-H$ is a generator of a semigroup $e^{-tH}$. We denote
\beno
\Psi(H)=\inf\{\|(H-i\lambda)f\|;f\in {D}(H),\ \la\in\R,\  \|f\|=1\}.
\eeno
Let us state the following Gearhart-Pr\"{u}ss type theorem for accretive operators.

\begin{theorem}\label{thm:GP}
Let $H$ be an m-accretive operator in a Hilbert space $X$. Then we have
$\|e^{-tH}\|\leq e^{-t\Psi(H)+{\pi}/{2}}$ for all $t\geq 0$.
\end{theorem}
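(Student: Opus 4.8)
The plan is to turn the geometric quantity $\Psi(H)$ into a resolvent bound on a vertical strip, and then run a sharp Hilbert-space (Plancherel) version of the Gearhart--Pr\"uss argument, in which the constant $\pi/2$ emerges from an optimal scalar test function. First observe that the statement is trivial when $\Psi(H)=0$: since $H$ is m-accretive, $e^{-tH}$ is a contraction semigroup, so $\|e^{-tH}\|\le 1\le e^{\pi/2}$. Hence I may assume $\Psi:=\Psi(H)>0$.

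The first step is the resolvent estimate. For each real $\lambda$ the operator $H-i\lambda$ is again m-accretive, since for ${\rm Re}\,\mu>0$ one has $H-i\lambda+\mu=H+(\mu-i\lambda)$ with ${\rm Re}(\mu-i\lambda)={\rm Re}\,\mu>0$, whence $\|(H-i\lambda+\mu)^{-1}\|\le ({\rm Re}\,\mu)^{-1}$. Combined with the definition of $\Psi$ this gives the lower bound $\|(H-i\lambda)f\|\ge\Psi\|f\|$ for all $\lambda\in\R$. Now for $z=a+ib$ with $0\le a<\Psi$ I estimate $\|(H-z)f\|=\|(H-ib)f-af\|\ge\|(H-ib)f\|-a\|f\|\ge(\Psi-a)\|f\|$, so $H-z$ is injective and bounded below. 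Surjectivity I would obtain by continuation: the left half-plane lies in $\rho(H)$, the resolvent set is open, and the a priori bound $\|(H-z)^{-1}\|\le(\Psi-a)^{-1}$ prevents the resolvent norm from blowing up as $a\uparrow\Psi$, so the whole strip $\{0\le{\rm Re}\,z<\Psi\}$ stays in $\rho(H)$. The conclusion is the uniform estimate $\|(H-z)^{-1}\|\le(\Psi-{\rm Re}\,z)^{-1}$ on this strip, and in particular $\|(H-i\lambda)^{-1}\|\le\Psi^{-1}$ on the imaginary axis.

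The second step is an energy identity read through Plancherel, which is where the Hilbert-space structure pays off. For $f\in D(H)$ set $u(t)=e^{-tH}f$ and $m(t)=\|u(t)\|$, which is non-increasing because $e^{-tH}$ is a contraction. For any Lipschitz $\chi$ with compact support in $(0,\infty)$, the function $g=\chi u$ solves $g'+Hg=\chi'u$, so taking the Fourier transform in $t$ yields $(H+i\lambda)\widehat g(\lambda)=\widehat{\chi'u}(\lambda)$, and the imaginary-axis bound from Step 1 gives $\|\widehat g(\lambda)\|\le\Psi^{-1}\|\widehat{\chi'u}(\lambda)\|$. Plancherel then produces the scalar inequality $\Psi^2\int_0^\infty\chi(t)^2 m(t)^2\,dt\le\int_0^\infty\chi'(t)^2 m(t)^2\,dt$, valid for every admissible $\chi$.

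The final and main step is to extract from this a pointwise bound on $m$. After rescaling to $\Psi=1$, the inequality says that the quadratic form $\int\bigl((\chi')^2-\chi^2\bigr)m^2$ is nonnegative on $C_c^\infty(0,\infty)$, i.e. the Schr\"odinger-type form with potential $m^2$ is positive; by Sturm comparison this is a disconjugacy condition that forces $m$ to decay at the unit rate, and I would identify the slowest admissible (hence extremal) profile, which is trigonometric, so that the sharp constant is the associated quarter period $\pi/2$. This reduces the theorem to $m(T)\le e^{\pi/2-\Psi T}m(0)$, after which density of $D(H)$ upgrades the estimate to the full operator-norm bound. I expect this last optimization to be the real obstacle: obtaining exactly $e^{\pi/2}$, rather than a generic constant, requires both the monotonicity of $m$ and the precisely optimal test function. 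A contour-integral (inverse Laplace) argument, by contrast, only yields a non-sharp constant depending on $\|H^2f\|$, which is why the Plancherel route is essential here.
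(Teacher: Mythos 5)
Your Steps 1 and 2 are correct and reproduce the paper's reduction exactly: the scalar inequality $\Psi^2\int\chi^2m^2\,dt\le\int(\chi')^2m^2\,dt$ is precisely what the paper derives (with $g=m^2$) via the same Fourier--Plancherel computation. In fact Step 1 is more than you need: the Plancherel step only uses the pointwise lower bound $\|(H-i\lambda)\widehat{g}(\lambda)\|\ge\Psi\|\widehat{g}(\lambda)\|$, which is the \emph{definition} of $\Psi$, so no invertibility of $H-i\lambda$ and no continuation of the resolvent into the strip $\{0\le{\rm Re}\,z<\Psi\}$ is required.

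The genuine gap is your final step, and you flagged it yourself. ``By Sturm comparison this is a disconjugacy condition that forces $m$ to decay at the unit rate'' is not a proof, and it is delicate to complete softly, because the positivity of the form $\int((\chi')^2-\Psi^2\chi^2)m^2$ is borderline: for $m$ constant on an interval of length $\pi/\Psi$ and $\chi$ a half-period sine the form vanishes exactly, so any compactness or extremality argument degenerates precisely at the profile that determines the constant; moreover it is unclear what ``the slowest admissible profile'' means for the pointwise bound $m(t)\le e^{\pi/2-\Psi t}m(0)$ at every fixed $t$, as opposed to an asymptotic rate. What the paper actually does, for $t\Psi>\pi/2$ (the other case being trivial since $e^{-tH}$ is a contraction), is write down one explicit test function on $[0,l]$ with $l=t+\pi/(2\Psi)$: a quarter-period sine $\sin(\Psi s)$ on $[0,\pi/(4\Psi)]$, the exponential $e^{\Psi s-\pi/4}/\sqrt{2}$ on the middle interval, and the matched quarter wave $e^{\Psi l-\pi}\sin(\Psi(l-s))$ on the final piece, glued $C^1$ so that $h:=(\chi')^2-\Psi^2\chi^2$ vanishes identically in the middle and has a definite sign on each cap. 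Then $\int_0^l hg\,ds\ge0$, the monotonicity of $g=m^2$ bounds $hg$ by $hg(0)$, $hg(t)$, $hg(t_3)$ on the appropriate subintervals (with $t_3=t+\pi/(4\Psi)$), and the exact cancellation $\int_t^l h\,ds=0$ kills the last contribution, leaving $0\le\frac{\Psi}{2}g(0)-\frac{\Psi}{2}e^{2\Psi l-2\pi}g(t)$, i.e.\ $m(t)\le e^{\pi/2-\Psi t}m(0)$. So the architecture of your proposal is the paper's, but the step you deferred --- constructing $\chi$ and running the monotonicity bookkeeping --- is exactly where the sharp constant $e^{\pi/2}$ lives, and it is absent.
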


We will give two applications of Theorem \ref{thm:GP}.\smallskip

The first application of Theorem \ref{thm:GP} is to give a simple proof of the result in \cite{CKRZ} on relaxation enhancement induced by incompressible flows. More precisely, we consider the passive scalar equation
\begin{align}\label{eq5}
\phi^A_t(x,t)+Au\cdot\nabla \phi^A(x,t)-\Delta \phi^A(x,t)=0,\quad \phi(x,0)=\phi_0(x),
\end{align}
in a smooth compact $d$-dimensional Riemannian manifold $M$. Here $\Delta$ is the Laplace-Beltrami operator on $M$, $u$ is a divergence free vector
field. Roughly speaking, a velocity field $u$ is relaxation enhancing if by the diffusive time-scale $O(1)$ arbitrarily much energy is already dissipated for $A$ large enough. The main result of \cite{CKRZ} characterizes relaxation enhancing
flows in terms of the spectral properties of the operator $ u\cdot\nabla.$ Precisely, $u$ is relaxation enhancing if and
only if the operator $ u\cdot\nabla$ has no nontrivial eigenfunctions in $\dot{H}^1(M).$ The proof of this result is based on the so-called
RAGE theorem. Thanks to Theorem \ref{thm:GP}, the proof of relaxation enhancing can be reduced to the resolvent estimate of the operator $Au\cdot\nabla-\Delta, $ avoiding the use of the RAGE theorem. See section 3 for more details.\smallskip

For $A>0,$ by time rescaling $ \tau=At$, \eqref{eq5} becomes
\begin{align}\label{eq7}
\phi_{\tau}+u\cdot\nabla \phi-\nu\Delta \phi=0,\ \ \phi(x,0)=\phi_0(x),
\end{align}
with $\nu=1/A.$ Our second application focus on the case when $M=\mathbb{T}^2$ and $u$ is a shear flow. More precisely, we study the decay estimates in time of the linear evolution semigroup $S_{\nu}(t):L^2(\mathbb{T}^2)\to L^2(\mathbb{T}^2),$ with $\nu>0$ a positive
parameter, generated by the drift-diffusion scalar equation \begin{align}\label{eq1}
\partial_t f-\nu\Delta f+u(y)\partial_xf=0,\ \ (x,y)\in\mathbb{T}^2,\ t>0,
\end{align} and of its hypoelliptic counterpart $R_{\nu}(t):L^2(\mathbb{T}^2)\to L^2(\mathbb{T}^2),$ generated by \begin{align}\label{eq2}
\partial_t f-\nu\partial_y^2 f+u(y)\partial_xf=0,\ \ (x,y)\in\mathbb{T}^2,\ t>0.
\end{align}
The case of general shear flows $(u(y),0)$ with a finite number of critical points was treated in \cite{BZ}, where the enhanced dissipation time-scale was proved to be $O(\nu^{-p}),$ for $p=p(n_0)=\frac{n_0+1}{n_0+3}\geq \frac{1}{3},$ where $n_0$ denotes the the maximal order of vanishing of $u'$ at the critical points.
The proof used the hypocoercivity method in \cite{Vi}. See also \cite{BH} for an interesting application of the enhanced dissipation and related work \cite{KZ}.

Our proof is based on Theorem \ref{thm:GP}. To this end, we study the resolvent estimate of $H=-\partial_y^2+iu(y)$ with $X=L^2(\mathbb{T})$ and ${D}(H)=H^2(\mathbb{T})$ in section 4. Set $\Psi_1(u)=\Psi(H)$ and
$$\Psi_0(u)=\inf\big\{\|Hf\|_{L^2};f\in {D}(H),\  \|f\|_{L^2}=1\big\}.$$
Then $\Psi_1(u)=\inf_{\l\in\R}\Psi_0(u-\l).$
We will give a lower bound of $\Psi_0(u)$ and $\Psi_1(u)$ in terms of the following quantities:
\beno
&&\omega_0(\delta,u)=\inf_{x,c\in\R}\int_{x-\d}^{x+\d}|\psi(y)-c|^2dy,\\
&&\omega_1(\delta,u)=\inf_{c\in\R}\omega_0(\delta,u-c)=\inf_{x,c_1,c_2\in\R}\int_{x-\d}^{x+\d}|\psi(y)-c_1-c_2y|^2dy.
\eeno
Here $u(y)=\psi'(y)$ and we identify a function on $\mathbb{T}$ with a $2\pi$-periodic function on $\R$. Notice that $\psi(y)$ may not be a periodic function. The quantities $\omega_0,\omega_1 $ are well defined since their values do not change if $\psi$ is added by a constant. We will give the dependence of $p$ on $u$ via the quantity $\omega_1(\delta,u) $.
More precisely, if $\omega_1(\delta,u)\geq C_1\d^{2\a+3} $ for $\d\in(0,1)$ and some constants $\a>0,\ C_1>0,$ then the enhanced dissipation time-scale is $O(\nu^{-\frac{\a}{\a+2}}).$

To show the effectiveness of this criterion, we will discuss the case when $u(y)=\\ \sum\limits_{n=1}^{\infty}a_n\sin(3^ny)$ is a Weierstrass function. Our result is
stated as follows.

\begin{itemize}
\item If $a_n\in\R,\ 3^{-n\a}\leq |a_n|\leq C3^{-n\a}$ for some $\a\in(0,1),$ and $1\leq |a_n|/|a_{n+1}|\leq 3$, then the enhanced dissipation time-scale is $O(\nu^{-\frac{\a}{\a+2}})$;
\item If $a_n\in\R,\ n^{-\a}\leq |a_n|\leq Cn^{-\a}$ for some $\a\in(1,2),$ and $1\leq |a_n|/|a_{n+1}|\leq 3$, then the enhanced dissipation time-scale is $O(|\ln \nu|^{\a}).$
\end{itemize}

Recently, Coti Zelati, Delgadino and Elgindi proved the enhanced dissipation time-scale $O(|\ln \nu|^{2})$ in \eqref{eq7} for all contact Anosov flows on a smooth $2d+1$ dimensional connected compact Riemannian manifold \cite{CDM}. Their proof is based on the knowledge of mixing decay rates.
Our result shows that similar phenomenon also happens for a class of shear flows.

Let us also mention some important progress on the enhanced dissipation  of the linearized Navier-Stokes equations around shear flows such as Couette flow and Kolmogorov flow \cite{BW,BGM,GNRS,IMM,LWZ,WZ,WZZ}.
\medskip

Throughout this paper, we denote by $C$ a constant independent
of $A,\nu,t$, which may be different from line to line.

\section{Proof of Gearhart-Pr\"{u}ss type theorem}

In this section, we prove Theorem \ref{thm:GP}. The proof is partially motivated by \cite{HS}.

\begin{proof}
Let $\Psi=\Psi(H)$. Since ${D}(H) $ is dense in $X$, we only need to prove that
\begin{align}\label{eqA}\|e^{-tH}f\|\leq e^{-t\Psi+{\pi}/{2}}\|f\|,\ \ \forall\ f\in {D}(H),\ t\geq 0.
\end{align}
For $f\in {D}(H),\ t\geq 0,$ let $g(t)=\|e^{-tH}f\|^2$. Since $H$ is accretive, $g(t)$ is decreasing for $t\geq 0,$ and we only need to prove \eqref{eqA} for $t\Psi>{\pi}/{2}. $ In this case, $\Psi>0$. We denote
$$t_1=\frac{\pi}{4\Psi},\ t_2=t-\frac{\pi}{4\Psi},\ t_3=t+\frac{\pi}{4\Psi},\ l=t+\frac{\pi}{2\Psi}.$$
For $\chi\in C^1[0,l],\ \chi(0)=\chi(l)=0,$ set $f_1(s)=\chi(s)e^{-sH}f,\ f_2(s)=\chi'(s)e^{-sH}f$. Then $\partial_tf_1+Hf_1=f_2$ in $[0,l]$.
Take Fourier transform in $t$: $ \widehat{f_j}(\la)=\int_0^lf_j(s)e^{-i\la s}ds$, for $j=1,2,\ \la\in\R.$ Then $\widehat{f_2}(\lambda)=(i\la+H)\widehat{f_1}(\lambda)$.
By the definition of $\Psi$, we have $\|\widehat{f_2}(\lambda)\|\geq \Psi\|\widehat{f_1}(\lambda)\|.$
We use Plancherel's Theorem to conclude
$$\|f_2\|_{L^2([0,l],X)}=(2\pi)^{-\frac{1}{2}}\|\widehat{f_2}\|_{L^2(\R,X)}
\geq(2\pi)^{-\frac{1}{2}}\Psi\|\widehat{f_1}\|_{L^2(\R,X)}=\Psi\|f_1\|_{L^2([0,l],X)}.$$

By the definitions of $f_1,\ f_2,\ g,$ the above inequality becomes
$$
\int_0^l\chi'(s)^2g(s)ds\geq \Psi^2\int_0^l\chi(s)^2g(s)ds.
$$
Now we choose $\chi$ as follows
$$\chi(s)=\left\{\ba{ll}\sin\Psi s,&0\leq s\leq t_1,\\ e^{\Psi s-\pi/4}/\sqrt{2},&t_1\leq s\leq t_2,\\ e^{\Psi l-\pi}\sin(\Psi(l-s) ),&t_2\leq s\leq l.\ea\right.$$
Set $h(s)=\chi'(s)^2-\Psi^2\chi(s)^2$. Then $\int_0^lh(s)g(s)ds\geq 0,$ and $$h(s)=\left\{\ba{ll}\Psi^2\cos(2\Psi s),&0\leq s\leq t_1,\\ 0,&t_1\leq s\leq t_2,\\ \Psi^2e^{2\Psi l-2\pi}\cos(2\Psi(l-s) ),&t_2\leq s\leq l.\ea\right.$$
Therefore, $h(s)\geq 0$ for $0\leq s\leq t_1 $ or $t_3\leq s\leq l, $ $h(s)\leq 0$  for $t_2\leq s\leq t_3. $ Since $g$ is decreasing, we have $h(s)g(s)\leq h(s)g(0)$ for $0\leq s\leq t_1 $, $h(s)g(s)\leq h(s)g(t)$ for $t_2\leq s\leq t,$ $h(s)g(s)\leq h(s)g(t_3)$ for $t\leq s\leq l, $ and
\begin{align*}
0&\leq\int_0^lh(s)g(s)ds=\int_0^{t_1}h(s)g(s)ds+\int_{t_2}^{l}h(s)g(s)ds\\ &\leq\int_0^{t_1}h(s)g(0)ds+\int_{t_2}^{t}h(s)g(t)ds+\int_{t}^{l}h(s)g(t_3)ds\\
&=\frac{\Psi}{2}g(0)-\frac{\Psi}{2}e^{2\Psi l-2\pi}g(t)+0.
\end{align*}
Therefore, $g(t)\leq e^{-2\Psi l+2\pi}g(0)$, which implies that
$$\|e^{-tH}f\|\leq e^{-\Psi l+\pi}\|f\|=e^{-\Psi t+\pi/2}\|f\|.$$

This completes the proof.\end{proof}

\section{Diffusion and mixing in fluid flow}

Let us recall the following definition from \cite{CKRZ}.

\begin{definition}Let $M$ be a smooth compact Riemannian manifold. The
incompressible flow $u$ on $M$ is called relaxation enhancing if for every $ \tau> 0$ and
$\d> 0$, there exist $A(\tau, \d)$ such that for any $ A> A(\tau, \d)$ and any $ \phi_0\in L^2(M),\ \|\phi_0\|_{L^2(M)}=1,$ $$\|\phi^A(\cdot,\tau)-\overline{\phi}\|_{L^2(M)}<\d,$$ where $\phi^A(x,t)$ is the solution of \eqref{eq5} and $\overline{\phi} $ the average of $\phi_0 $.
\end{definition}

 We take $X=\big\{f\in L^2(M)|\int_M f=0\big\}$ the subspace of
mean zero functions, $H=H_A=-\Delta+Au\cdot\nabla$ with ${D}(H)=H^2(M)\cap X .$ Set $\Psi_2(A)=\Psi(H_A).$ Our result is as follows.

\begin{theorem}
Let $M$ be a smooth compact Riemannian manifold. A
continuous incompressible
flow $u$ is relaxation-enhancing
if and only if the operator $u\cdot\nabla $ has no eigenfunctions in $H^1(M),$ other than
the constant function.
\end{theorem}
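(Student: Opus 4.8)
The plan is to recast the relaxation-enhancing property as an operator-norm statement for $e^{-\tau H_A}$ and then apply Theorem \ref{thm:GP}. Writing the solution of \eqref{eq5} as $\phi^A(\cdot,\tau)=\overline\phi+e^{-\tau H_A}(\phi_0-\overline\phi)$ with $H_A=-\Delta+Au\cdot\nabla$ acting on $X$, and noting $\phi_0-\overline\phi\in X$ with $\|\phi_0-\overline\phi\|\le\|\phi_0\|=1$, one sees that $u$ is relaxation enhancing if and only if for every $\tau,\delta>0$ one has $\|e^{-\tau H_A}\|_{\mathcal B(X)}<\delta$ for all $A$ large. Since $u$ is divergence free, $u\cdot\nabla$ is skew-symmetric on $X$, so $\mathrm{Re}\langle H_Af,f\rangle=\|\nabla f\|^2\ge0$; together with a Lax--Milgram and elliptic-regularity argument for the resolvent equation, this shows $H_A$ is m-accretive, so Theorem \ref{thm:GP} applies and gives $\|e^{-\tau H_A}\|_{\mathcal B(X)}\le e^{-\tau\Psi_2(A)+\pi/2}$. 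Hence the ``if'' direction reduces to the claim that, in the absence of nonconstant $H^1$ eigenfunctions of $u\cdot\nabla$, one has $\Psi_2(A)\to\infty$ as $A\to\infty$.

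To prove this claim I would argue by contradiction and compactness. If $\Psi_2(A)\not\to\infty$, pick $A_n\to\infty$, unit vectors $f_n\in D(H_{A_n})$ and $\lambda_n\in\R$ with $g_n:=(H_{A_n}-i\lambda_n)f_n$ bounded in $X$. Taking the real part of $\langle g_n,f_n\rangle$ gives $\|\nabla f_n\|^2=\mathrm{Re}\langle g_n,f_n\rangle\le C$, so $\{f_n\}$ is bounded in $H^1(M)$; by Rellich compactness a subsequence converges strongly in $L^2$ to some $f\in X\cap H^1$ with $\|f\|=1$. The crucial point is to control the frequency $\lambda_n/A_n$: taking the imaginary part of $\langle g_n,f_n\rangle$ and using skew-symmetry yields $\lambda_n=A_n\,\mathrm{Im}\langle u\cdot\nabla f_n,f_n\rangle+O(1)$, and since $\|u\cdot\nabla f_n\|\le\|u\|_\infty\|\nabla f_n\|$ is bounded, $\lambda_n/A_n$ stays bounded and, along a subsequence, tends to some $\beta\in\R$. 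Dividing the identity defining $g_n$ by $A_n$ and passing to the distributional limit, the diffusion term drops out and one obtains $u\cdot\nabla f=i\beta f$ with $f\in H^1$, $\|f\|=1$. As $f$ has mean zero it cannot be a nonzero constant, so $f$ is a nonconstant $H^1$ eigenfunction of $u\cdot\nabla$, contradicting the hypothesis.

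For the ``only if'' direction I would show directly, at the level of the semigroup rather than $\Psi_2$, that a nonconstant $H^1$ eigenfunction obstructs relaxation. Normalizing so that $\phi\in X\cap H^1$, $\|\phi\|=1$, $u\cdot\nabla\phi=i\mu\phi$, set $\phi^A(t)=e^{-tH_A}\phi$ and $F(t)=e^{iA\mu t}\langle\phi^A(t),\phi\rangle$. Differentiating and using skew-symmetry together with the eigenrelation, the two terms of order $A$ cancel exactly, leaving $F'(t)=-e^{iA\mu t}\langle\nabla\phi^A(t),\nabla\phi\rangle$, an identity free of $A$. The energy balance $\int_0^\infty\|\nabla\phi^A\|^2\,dt\le\tfrac12$ then gives, by Cauchy--Schwarz, $|F(\tau)-F(0)|\le\|\nabla\phi\|\,\tau^{1/2}/\sqrt2$; since $F(0)=1$, choosing $\tau_0=1/(2\|\nabla\phi\|^2)$ forces $\|\phi^A(\tau_0)\|\ge|F(\tau_0)|\ge\tfrac12$ for every $A$, which contradicts relaxation enhancement applied to $\phi_0=\phi$. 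Combining the two directions yields the stated equivalence.

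I expect the main obstacle to be the limiting argument in the ``if'' direction, specifically verifying that $\lambda_n/A_n$ remains bounded so that the weak limit is a genuine eigenfunction rather than being washed out by the drift; the skew-symmetry of $u\cdot\nabla$ and the uniform $H^1$ bound coming from the real-part estimate are exactly what make this work. A secondary technical point is justifying the differentiation defining $F'$ at the $H^1$ (rather than $H^2$) regularity of $\phi$, which is handled by using the weak formulation of the drift--diffusion equation, where only one derivative is paired on each factor.
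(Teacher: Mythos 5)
Your proposal is correct and follows essentially the same route as the paper: the ``only if'' direction via the quantity $e^{iA\mu t}\langle\phi^A(t),\phi\rangle$ together with the energy balance $\int_0^\tau\|\nabla\phi^A\|^2\,dt\le 1/2$, and the ``if'' direction by reducing relaxation enhancement to $\Psi_2(A)\to\infty$ through Theorem \ref{thm:GP} and running the identical compactness/contradiction argument on $g_n=(H_{A_n}-i\lambda_n)f_n$. The only cosmetic difference is how the frequency $\lambda_n/A_n$ is handled: you first prove its boundedness from the imaginary part of $\langle g_n,f_n\rangle$ and the bound $\|u\|_\infty\|\nabla f_n\|$, whereas the paper obtains the convergence $i\lambda_n/A_n\to\langle u\cdot\nabla f_0,f_0\rangle$ directly by testing the identity with $f=f_0$ and using $\langle f_n,f_0\rangle\to 1\neq 0$ --- an equivalent variant.
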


\begin{proof}The proof of the first part is the same as in \cite{CKRZ}. First of all, we have $$\partial_t\|\phi^A(t)\|_{L^2}^2=\langle\phi^A_t,\phi^A\rangle+\langle\phi^A,\phi^A_t\rangle=-2\|\nabla\phi^A(t)\|_{L^2}^2. $$
If $u\cdot\nabla $ has non constant eigenfunctions in $H^1(M),$ then $u\cdot\nabla $ has non constant eigenfunctions in $H^1(M)\cap X.$
Assume that the initial datum $\phi_0\in H^1(M)\cap X$ for \eqref{eq5} is an eigenvector of $u\cdot\nabla $ corresponding to an eigenvalue $i\la$,
normalized so that $\|\phi_0\|_{L^2}=1,$ then $\overline{\phi}=0,\ \la\in\R.$
Take the inner product of \eqref{eq5} with $\phi_0$, we
arrive at
$$\partial_t\langle\phi^A(t),\phi_0\rangle=-iA\l\langle\phi^A(t),\phi_0\rangle-\langle\Delta\phi^A(t),\phi_0\rangle.$$
This along with the assumption $\phi_0\in H^1(M)$ leads to
$$|\partial_t(e^{iA\l t}\langle\phi^A(t),\phi_0\rangle)|=|\langle\nabla\phi^A(t),\nabla\phi_0\rangle|\leq \frac{1}{2}(\|\nabla\phi^A(t)\|_{L^2}^2+\|\nabla\phi_0\|_{L^2}^2).$$
Note that $\int_0^{\tau}\|\nabla\phi^A(t)\|_{L^2}^2dt=(\|\phi_0\|_{L^2}^2-\|\phi^A(t)\|_{L^2}^2)/2\leq1/2$. Then for $0<t\leq\tau=(2\|\nabla\phi_0\|_{L^2}^2)^{-1}$,
we have $|\langle\phi^A(t),\phi_0\rangle|\geq 1/2. $ Thus, $\|\phi^A(\tau)\|_{L^2}\geq 1/2$ uniformly in $A,$ and $u$ is not relaxation-enhancing.

Now we prove the converse, we first claim that $\lim\limits_{A\to+\infty}\Psi_2(A)=+\infty $ implies relaxation-enhancing. In fact, since $\phi^A(\cdot,\tau)-\overline{\phi}\in X $ and $\phi^A(\cdot,\tau)-\overline{\phi}=e^{-\tau H_A}(\phi_0-\overline{\phi}),$ by Theorem \ref{thm:GP}, we have $$\|\phi^A(\cdot,\tau)-\overline{\phi}\|_{L^2(M)}\leq e^{-\tau\Psi_2(A)+{\pi}/{2}}\|\phi_0-\overline{\phi}\|_{L^2(M)}\leq e^{-\tau\Psi_2(A)+{\pi}/{2}}.$$
If $\lim\limits_{A\to+\infty}\Psi_2(A)=+\infty $, then we can find $A(\tau, \d)$ such that for any $ A> A(\tau, \d),$ we have $\Psi_2(A)>(\pi/2-\ln\d)/\tau,$ and thus $\|\phi^A(\cdot,\tau)-\overline{\phi}\|_{L^2(M)}<\d.$

Next we claim that $\liminf\limits_{A\to+\infty}\Psi_2(A)<+\infty$ implies that $u\cdot\nabla $ has a nonzero eigenfunction in $H^1(M)\cap X.$
In fact, in this case, there exists $A_n\to+\infty,\ C_0\in\R$ such that $\Psi_2(A_n)<C_0 $ and there exists $\la_n\in\R,\ f_n\in X$ such that $\|f_n\|_{L^2(M)}=1,\ \|(H_{A_n}-i\la_n)f_n\|_{L^2(M)}<C_0. $
Then $$\|\nabla f_n\|_{L^2(M)}^2={\rm Re}\langle f_n,g_n\rangle\leq \|f_n\|_{L^2(M)}\|g_n\|_{L^2(M)}<C_0,$$
here $g_n=(H_{A_n}-i\la_n)f_n$. Thus, the sequence $\{f_n\}$ is bounded in $H^1(M)$ and there exists a subsequence of $\{f_n\}$ (still denoted by $\{f_n\}$) and $f_0\in H^1(M),$ such that $f_n\to f_0 $ strongly in $L^2(M).$
Therefore, $\|f_0\|_{L^2(M)}=1,\ f_0\in X.$ For $f\in H^1(M),$ we have
$$\langle g_n,f\rangle=\langle \nabla f_n,\nabla f\rangle+A_n\langle u\cdot\nabla f_n,f\rangle-i\la_n\langle f_n,f\rangle,$$
and
$$\langle u\cdot\nabla f_n,f\rangle-i\frac{\la_n}{A_n}\langle f_n,f\rangle=\frac{\langle g_n,f\rangle-\langle \nabla f_n,\nabla f\rangle}{A_n}\to 0,$$
as $n\to +\infty,$ here we used $A_n\to +\infty$ and
\begin{align*}|\langle g_n,f\rangle-\langle \nabla f_n,\nabla f\rangle|&\leq \|g_n\|_{L^2(M)}\|f\|_{L^2(M)}+\|\nabla f_n\|_{L^2(M)}\|\nabla f\|_{L^2(M)}\\ &\leq C_0\|f\|_{L^2(M)}+C_0^{\frac{1}{2}}\|\nabla f\|_{L^2(M)}.
\end{align*}
Moreover,
 $$\langle u\cdot\nabla f_n,f\rangle=-\langle  f_n,u\cdot\nabla f\rangle\to -\langle  f_0,u\cdot\nabla f\rangle=\langle u\cdot\nabla f_0,f\rangle,$$
 and $\langle f_n,f\rangle\to \langle f_0,f\rangle$ as $n\to +\infty$. Therefore,
 $$\lim_{n\to +\infty}i\frac{\la_n}{A_n}\langle f_n,f\rangle=\langle u\cdot\nabla f_0,f\rangle.$$
 If we take $f=f_0$ then we have $\langle f_n,f\rangle\to \langle f_0,f\rangle=\langle f_0,f_0\rangle=1\neq 0 $ and $i\dfrac{\la_n}{A_n}\to \langle u\cdot\nabla f_0,f_0\rangle\eqdef i\l, $ as $n\to +\infty$. Therefore, for every $f\in H^1(M),$ we have
 $$i\la\langle f_0,f\rangle=\lim_{n\to +\infty}i\frac{\la_n}{A_n}\langle f_n,f\rangle=\langle u\cdot\nabla f_0,f\rangle.$$
 Since $H^1(M)$ is dense in $L^2(M)$, we have $i\la f_0=u\cdot\nabla f_0$, $f_0\neq 0.$ This proves our claim.

With the above two claims, we prove the converse part.
\end{proof}

Compared with \cite{CKRZ}, we don't need to assume $u$ to be Lipschitz continuous and our proof is more easier. As in \cite{CKRZ}, with a slight modification, we can prove a more general result. Let $ \Gamma$ be
a self-adjoint, positive, unbounded operator with a discrete spectrum on a
separable Hilbert space $H$: Let $0 < \la_1\leq\la_2 \leq \cdots$ be the eigenvalues of $ \Gamma$,
and $e_j$ the corresponding orthonormal eigenvectors forming a basis in $H$. The
(homogeneous) Sobolev space $H^m(\Gamma)$ associated with $ \Gamma$ is formed by all vectors $\psi=\sum_jc_je_j$ such that $\|\psi\|_m^2=\sum_j\la_j^m|c_j|^2<+\infty.$ Note that $H^2(\Gamma)={D}(\Gamma).$ Let $L$ be a symmetric operator
such that $H^1(\Gamma)\subseteq{D}(L) $ and $\|L\psi\|_0\leq C\|\psi\|_1 $ for $\psi\in H^1(\Gamma). $ Consider a solution $\phi^A(t)$ of the Bochner differential equation
\begin{align}\label{eqL}
\partial_t\phi^A(t)=iAL\phi^A(t)-\Gamma\phi^A(t),\quad \phi^A(0)=\phi_0.
\end{align}

\begin{theorem}
For $\Gamma,\ L$ satisfying the above conditions, the following two statements are equivalent:
\begin{itemize}
\item For every $\tau> 0$ and $\d> 0$,
there exist $A(\tau, \d)$ such that for any $ A> A(\tau, \d)$ and any $ \phi_0\in H,\ \|\phi_0\|_{0}=1,$ the solution $ \phi^A(t) $ of the equation \eqref{eqL} satisfies $\|\phi^A(\tau)\|_{0}<\d.$ \item The operator $L$ has no eigenvectors lying in $H^1(\Gamma)$. \end{itemize}
\end{theorem}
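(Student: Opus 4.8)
The plan is to recast the solution of \eqref{eqL} as a semigroup generated by an m-accretive operator and then run the same two-claim argument used for the preceding theorem, with $\Gamma^{1/2}$ playing the role of $\nabla$ and the discreteness of the spectrum of $\Gamma$ replacing the Rellich compactness.

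First I would set $H_A=\Gamma-iAL$ with $D(H_A)=D(\Gamma)=H^2(\Gamma)$, so that the solution of \eqref{eqL} is $\phi^A(t)=e^{-tH_A}\phi_0$. To invoke Theorem \ref{thm:GP} I must check that $H_A$ is m-accretive. Accretivity is immediate: since $L$ is symmetric, $\langle Lf,f\rangle\in\R$, so ${\rm Re}\langle H_Af,f\rangle={\rm Re}\langle\Gamma f,f\rangle=\|f\|_1^2\geq 0$. For m-accretivity I would note that the hypothesis $\|L\psi\|_0\leq C\|\psi\|_1=C\|\Gamma^{1/2}\psi\|_0$, together with the elementary interpolation $\|\Gamma^{1/2}\psi\|_0\leq\epsilon\|\Gamma\psi\|_0+C_\epsilon\|\psi\|_0$ (valid for every $\epsilon>0$, as one sees from the spectral bound $\lambda_j\leq\epsilon\lambda_j^2+C_\epsilon$), shows that $-iAL$ is $\Gamma$-bounded with relative bound zero; since $\Gamma$ is self-adjoint and positive, hence m-accretive, and $-iAL$ is accretive, the standard perturbation result for m-accretive operators gives that $H_A$ is m-accretive. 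Then I set $\Psi_2(A)=\Psi(H_A)$.

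The equivalence then follows through two claims. \emph{Claim A}: $\lim_{A\to+\infty}\Psi_2(A)=+\infty$ implies the first statement, because Theorem \ref{thm:GP} yields $\|\phi^A(\tau)\|_0=\|e^{-\tau H_A}\phi_0\|_0\leq e^{-\tau\Psi_2(A)+\pi/2}$, so choosing $A$ with $\Psi_2(A)>(\pi/2-\ln\delta)/\tau$ gives $\|\phi^A(\tau)\|_0<\delta$. For the other direction, if $L$ has an eigenvector $\phi_0\in H^1(\Gamma)$, say $L\phi_0=\lambda\phi_0$ with $\lambda\in\R$ (by symmetry) and $\|\phi_0\|_0=1$, I would repeat the energy computation of the previous proof verbatim: from $\partial_t\|\phi^A\|_0^2=-2\|\phi^A\|_1^2$ and $\partial_t(e^{-iA\lambda t}\langle\phi^A,\phi_0\rangle)=-e^{-iA\lambda t}\langle\Gamma^{1/2}\phi^A,\Gamma^{1/2}\phi_0\rangle$, the choice $\tau=(2\|\phi_0\|_1^2)^{-1}$ forces $|\langle\phi^A(\tau),\phi_0\rangle|\geq 1/2$, hence $\|\phi^A(\tau)\|_0\geq 1/2$ uniformly in $A$, so the first statement fails. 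Thus the real content is \emph{Claim B}: $\liminf_{A\to+\infty}\Psi_2(A)<+\infty$ produces a nonzero eigenvector of $L$ in $H^1(\Gamma)$.

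To prove Claim B I would pick $A_n\to+\infty$, $\mu_n\in\R$, and $f_n\in H^2(\Gamma)$ with $\|f_n\|_0=1$ and $\|g_n\|_0<C_0$, where $g_n=(H_{A_n}-i\mu_n)f_n$. Taking the real part of $\langle g_n,f_n\rangle$ and using that $-iL$ is skew-symmetric kills the $A_n$- and $\mu_n$-terms and leaves $\|f_n\|_1^2={\rm Re}\langle g_n,f_n\rangle<C_0$, so $\{f_n\}$ is bounded in $H^1(\Gamma)$. Here is the key new ingredient of the abstract setting: since $\lambda_j\to+\infty$, a bounded sequence in $H^1(\Gamma)$ has uniformly vanishing high-frequency tails, which yields the compact embedding $H^1(\Gamma)\hookrightarrow H$; passing to a subsequence I get $f_n\to f_0$ strongly in $H$ with $f_0\in H^1(\Gamma)$ and $\|f_0\|_0=1$. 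Finally, for any test vector $f\in H^1(\Gamma)$ I would expand $\langle g_n,f\rangle=\langle\Gamma^{1/2}f_n,\Gamma^{1/2}f\rangle-iA_n\langle f_n,Lf\rangle-i\mu_n\langle f_n,f\rangle$, divide by $iA_n$, and let $n\to\infty$: the relative boundedness bounds $\langle\Gamma^{1/2}f_n,\Gamma^{1/2}f\rangle$ and $\langle g_n,f\rangle$ independently of $n$, so their $A_n^{-1}$-multiples vanish; taking $f=f_0$ first identifies $\lim\mu_n/A_n=-\langle Lf_0,f_0\rangle\in\R=:-\lambda$, and the general $f$ then gives $\langle Lf_0,f\rangle=-\lambda\langle f_0,f\rangle$ for all $f$, whence $Lf_0=-\lambda f_0$ by density of $H^1(\Gamma)$ in $H$. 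The main obstacle is precisely this passage to the limit in Claim B: one must secure strong convergence of $\{f_n\}$ in $H$ via the compact embedding coming from $\lambda_j\to\infty$, and control the relatively bounded perturbation $L$ so that the $A_n$- and $\mu_n$-scaled terms behave as claimed; the other point requiring care is establishing m-accretivity of $H_A$ in the first place, so that Theorem \ref{thm:GP} applies.
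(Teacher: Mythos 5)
Your proposal is correct and follows essentially the same route as the paper: the paper proves the manifold case in detail and asserts the abstract theorem follows ``with a slight modification,'' which is exactly your adaptation --- Theorem \ref{thm:GP} applied to the m-accretive $H_A=\Gamma-iAL$, the energy argument for the eigenvector direction, and the two-claim resolvent argument with $\Gamma^{1/2}$ replacing $\nabla$ and the tail estimate from $\lambda_j\to\infty$ replacing Rellich compactness. The only blemish is a harmless sign slip at the end of Claim B: with your convention $\lambda=\langle Lf_0,f_0\rangle$ the limit identity gives $\langle Lf_0,f\rangle=\lambda\langle f_0,f\rangle$, hence $Lf_0=\lambda f_0$ rather than $-\lambda f_0$, which does not affect the conclusion.
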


Here we don't assume that $\| e^{iLt}\psi\|_1\leq B(t)\|\psi\|_1$. Therefore, our result is applicable to the example given in \cite{CKRZ}: $H=L^2(0,1),\ \Gamma f(x)=\sum_n e^{n^2}\widehat{f}(n)e^{2\pi i nx},\ Lf(x)=xf(x).$

\section{Resolvent estimate for the shear flows}

In this section, we give the resolvent estimate of the operator $ H=H_{(u)}=-\partial_y^2+iu(y).$
We start with a few basic observations concerning the operator $H_{(u)}.$ As is well
known, the operator $H_{(0)}=-\partial_y^2$ is self-adjoint in $L^2(\mathbb{T})$ with a compact resolvent,
and its spectrum is a sequence of eigenvalues $\{\la_n^0\}_{n\in \mathbb{N}}$, where $\la_{0}^0=0,\ \la_{2n}^0=\la_{2n-1}^0=n^2.$ By the classical perturbation theory \cite{Kato}, it follows that $H_{(u)}$ has a compact resolvent for
any $u\in C(\mathbb{T},\R),$ and that its spectrum is again a sequence of eigenvalues $\{\la_n^{(u)}\}_{n\in \mathbb{N}}$, with ${\rm Re}(\la_n^{(u)})\to+\infty$ as $n\to+\infty.$

Since ${\rm Re}\langle Hf,f\rangle_{L^2}=\|\partial_y f\|_{L^2}^2\geq 0$ for $f\in {D}(H)=H^2(\mathbb{T})$, $H$ is accretive, ${\rm Re}(\la_n^{(u)})\geq 0,$ and $\|(H+\la)u\|_{L^2}\|u\|_{L^2}\geq {\rm Re}\langle (H+\la)u,u\rangle_{L^2}\geq {\rm Re}\langle \la u,u\rangle_{L^2}=({\rm Re} \la)\|u\|_{L^2}^2,\ \|(H+\la)u\|_{L^2}\geq ({\rm Re} \la)\|u\|_{L^2}$ for ${\rm Re}\la>0, $ which implies that $H$ is m-accretive.

For $\la\in\R$, $H-i\la$ is invertible if and only if
$$
\inf\{\|(H-i\la)f\|_{L^2};f\in {D}(H),\quad \|f\|_{L^2}=1\}>0.
$$
If $\Psi(H)>0 $ then $H-i\la$ is invertible for all $\la\in\R$, and
$$\Psi(H)=\left(\sup_{\la\in\R}\|(H-i\la)f\|\right)^{-1}=\inf\big\{\|(H-i\la)f\|_{L^2};f\in {D}(H),\ \l\in\R,\  \|f\|_{L^2}=1\big\}.
$$
Thus, our definition of $\Psi(H) $ is the same as in \cite{GGN}. We first give a lower bound of $\Psi_0(u)$ in terms of $ \omega_0(\delta,u)$. Then lower bound of $\Psi_1(u)$ follows by minimizing $\la.$ Recall that $$\Psi_0(u)=\inf\{\|Hf\|_{L^2};f\in {D}(H),\  \|f\|_{L^2}=1\}.$$

The following lemma shows the existence of the minimizer.

\begin{lemma}\label{Lem: 2}
If $ \mu=\Psi_0(u),$ then there exists $0\neq f\in {D}(H)$ so that
 $Hf=\mu\overline{f}$.
\end{lemma}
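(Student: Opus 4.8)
The plan is to produce the minimizer by the direct method of the calculus of variations and then to exploit the complex-symmetry of $H$, expressed through the conjugation $C:f\mapsto\overline f$, to upgrade the (self-adjoint) Euler–Lagrange equation into the stated antilinear relation $Hf=\mu\overline f$. The decisive structural fact I will use is that $H^*=-\partial_y^2-iu=CHC$, i.e. $H^*v=\overline{H\overline v}$ for $v\in {D}(H^*)=H^2(\mathbb{T})$, which holds precisely because $u$ is real.

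First I would establish existence of a minimizer. Take $f_n\in {D}(H)$ with $\|f_n\|_{L^2}=1$ and $\|Hf_n\|_{L^2}\to\mu$. Since $H$ is accretive, $\|\partial_y f_n\|_{L^2}^2={\rm Re}\langle Hf_n,f_n\rangle\le\|Hf_n\|_{L^2}$ stays bounded; and because $u\in C(\mathbb{T})$ is bounded, $\|\partial_y^2 f_n\|_{L^2}=\|Hf_n-iuf_n\|_{L^2}\le\|Hf_n\|_{L^2}+\|u\|_{L^\infty}$ is bounded as well, so $\{f_n\}$ is bounded in $H^2(\mathbb{T})$. By the compact embedding $H^2(\mathbb{T})\hookrightarrow L^2(\mathbb{T})$ I pass to a subsequence converging strongly in $L^2$ and weakly in $H^2$ to some $f_0$; then $\|f_0\|_{L^2}=1$, and since $H:H^2(\mathbb{T})\to L^2(\mathbb{T})$ is bounded (hence maps the weakly convergent $f_n$ to a weakly convergent sequence), weak lower semicontinuity of the norm gives $\|Hf_0\|_{L^2}\le\liminf\|Hf_n\|_{L^2}=\mu$. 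As $\|f_0\|_{L^2}=1$ forces $\|Hf_0\|_{L^2}\ge\mu$, we conclude $\|Hf_0\|_{L^2}=\mu$, so $f_0$ is a minimizer.

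Next I would derive the Euler–Lagrange equation. Minimizing the quadratic functional $\|Hf\|^2$ under $\|f\|^2=1$, the first-variation computation (varying by arbitrary $g\in {D}(H)$, and noting $ig$ is admissible to recover the imaginary part) gives $\langle Hf_0,Hg\rangle=\mu^2\langle f_0,g\rangle$ for all $g\in {D}(H)$; by the definition of the adjoint this says $w:=Hf_0\in {D}(H^*)=H^2(\mathbb{T})$ with $H^*w=\mu^2 f_0$. If $\mu=0$ then $Hf_0=0=\mu\overline{f_0}$ and $f_0$ already works, so I assume $\mu>0$. Now the crux: applying $C$ to $H^*w=\mu^2f_0$ and using $H^*=CHC$ yields $H\overline w=\mu^2\overline{f_0}$. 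For $\theta\in\R$ set $f_\theta=e^{i\theta}f_0+\mu^{-1}e^{-i\theta}\overline w\in {D}(H)$; a short computation gives $Hf_\theta=e^{i\theta}w+\mu e^{-i\theta}\overline{f_0}$ and $\mu\overline{f_\theta}=e^{i\theta}w+\mu e^{-i\theta}\overline{f_0}$, which are identical, so $Hf_\theta=\mu\overline{f_\theta}$ for every $\theta$.

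It then remains only to choose $\theta$ so that $f_\theta\ne0$. Since $\|f_0\|_{L^2}=\mu^{-1}\|\overline w\|_{L^2}=1$, the vanishing $f_\theta=0$ would force $f_0=-\mu^{-1}e^{-2i\theta}\overline w$, an equation that can hold for at most one value of $e^{-2i\theta}$; picking any other $\theta$ yields $0\ne f:=f_\theta\in {D}(H)$ with $Hf=\mu\overline f$, as required. The $H^2$ bound, weak-compactness, and first-variation steps are routine; the one genuinely special ingredient is the complex-symmetry identity $H^*=CHC$, which is what converts the self-adjoint relation $H^*Hf_0=\mu^2f_0$ into the antilinear eigenrelation, and the only point requiring a little care is the phase selection guaranteeing $f_\theta\neq0$.
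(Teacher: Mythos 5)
Your proof is correct and follows essentially the same route as the paper: the direct method for the minimizer, the Euler--Lagrange identity $\langle Hf_0,Hg\rangle=\mu^2\langle f_0,g\rangle$ yielding $Hf_0\in {D}(H^*)$ with $H^*(Hf_0)=\mu^2 f_0$, and the reality of $u$ (your $H^*=CHC$) to convert this into the antilinear relation. Your one-parameter family $f_\theta$ simply subsumes the paper's two explicit candidates $f_0+\overline{g_0}$ and $i(f_0-\overline{g_0})$ (your $\theta=0$ and $\theta=\pi/2$, with $g_0=\mu^{-1}Hf_0$), together with the same phase/nonvanishing observation.
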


\begin{proof}By the definition of $ \mu=\Psi_0(u)$, we have $\|Hg\|_{L^2}\geq \mu\|g\|_{L^2}$ for all $g\in {D}(H),$ and we can take $f_n\in {D}(H),\ \|f_n\|_{L^2}=1$ such that $\|Hf_n\|_{L^2}\to\mu $ as $n\to\infty$. Then the sequence $\{f_n\}$ is bounded in $H^2(\mathbb{T})$, and there exists a subsequence of $\{f_n\}$ (still denoted by $\{f_n\}$) and $f_0\in {D}(H),$ such that $f_n\to f_0 $ strongly in $L^2(\mathbb{T})$ and $Hf_n\rightharpoonup Hf_0$ weakly  in $L^2(\mathbb{T})$ as $n\to\infty$.
Therefore, $\|f_0\|_{L^2}=1,\ \|Hf_0\|_{L^2}\leq\mu.$ If $\mu=0$, then we can take $ f=f_0.$ If $\mu>0,$  we have for all $g\in {D}(H),\ t\in\R,\ \|Hf_0+tHg\|_{L^2}\geq \mu\|f_0+tg\|_{L^2},$ and the equality holds at $t=0,$ therefore,
$$
0=\left.\frac{d}{dt}\right|_{t=0}(\|Hf_0+tHg\|_{L^2}^2-\mu^2\|f_0+tg\|_{L^2}^2)=2{\rm Re}\langle Hf_0,Hg\rangle-2\mu^2{\rm Re}\langle f_0,g\rangle,
$$
and we also have $0=2{\rm Re}\langle Hf_0,iHg\rangle-2\mu^2{\rm Re}\langle f_0,ig\rangle$. Thus, $2\langle Hf_0,Hg\rangle=2\mu^2\langle f_0,g\rangle. $ Set $Hf_0=\mu g_0 $, then $\langle g_0,Hg\rangle=\mu\langle f_0,g\rangle $ for all $g\in {D}(H)$. This implies that $g_0\in {D}(H^*), $ and $H^*g_0=\mu f_0$. Here $H^*=-\partial_y^2-iu(y)$ and ${D}(H^*)=H^2(\mathbb{T}).$ Therefore, $\overline{g_0}\in {D}(H) $ and  $H\overline{g_0}=\mu \overline{f_0}$. Since $f_0+\overline{g_0}\neq 0$ or $f_0-\overline{g_0}\neq 0$, we can take $f=f_0+\overline{g_0}$ or $i(f_0-\overline{g_0})$.
\end{proof}

Now we need to study the equation $Hf=\mu\overline{f}$. Set $u(y)=\psi'(y),\ \psi(y)\in\R$ for $y\in \R.$ Now we can define $\omega_0(\delta,u),\ \omega_1(\delta,u)$ as in section 1. Recall that
$$\omega_0(\delta,u)=\inf_{x,c\in\R}\int_{x-\d}^{x+\d}|\psi(y)-c|^2dy.$$

 \begin{lemma}\label{Lem: 3}
 If $0\neq f\in {D}(H),\ Hf=\mu\overline{f},\ \mu\geq 0,\ \delta>0,$ then $\sqrt{\mu}\geq\dfrac{\pi}{2\d}$ or $36\sqrt{\mu}\tan(\sqrt{\mu}\d)\geq \omega_0(\delta,u).$
 \end{lemma}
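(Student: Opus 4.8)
We are given $0\neq f\in D(H)=H^2(\mathbb{T})$ with $Hf=\mu\overline{f}$, where $H=-\partial_y^2+iu(y)$ and $u=\psi'$. Writing this out, $-f''+iu f=\mu\overline{f}$. The plan is to extract information about the oscillation of $\psi$ on an interval of length $2\delta$ under the hypothesis that $f$ does not vanish too fast (quantified by $\sqrt{\mu}<\pi/(2\delta)$), and conclude the lower bound on $\omega_0(\delta,u)$. The two alternatives in the conclusion reflect a dichotomy: either $\mu$ is large enough that $\sqrt{\mu}\geq\pi/(2\delta)$ outright, or else we are in a regime where $f$ cannot have a zero inside a suitable subinterval and a quantitative estimate kicks in.

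**The core mechanism.** I would first split $f=f_R+if_I$ into real and imaginary parts and take the real part of the equation after pairing against suitable test functions, or more directly work with $|f|^2$. The key algebraic move should come from multiplying $Hf=\mu\overline f$ by $\overline f$ and integrating against a cutoff supported on $[x-\delta,x+\delta]$. Since $\mathrm{Re}\langle Hf,f\rangle=\|f'\|_{L^2}^2$, one gets control of $\int |f'|^2$; the imaginary part brings in $\int u|f|^2$, and after subtracting a constant $c$ (legitimate because $\int_{x-\delta}^{x+\delta}\partial_y(\text{something}) = $ boundary terms and because we may freely shift $\psi$ by a constant) one relates $\int u|f|^2$-type quantities to $\int(\psi-c)\,(\ldots)$ via integration by parts, turning $u=\psi'$ into $\psi-c$. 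This is where $\omega_0$ enters: the quantity $\int_{x-\delta}^{x+\delta}|\psi-c|^2$ is exactly the infimum-optimized version of what appears.

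**The eigenvalue/non-vanishing step.** The role of the hypothesis $\sqrt{\mu}<\pi/(2\delta)$ is to guarantee, via a comparison with the equation $-v''=\mu v$ on an interval (whose first Dirichlet eigenvalue on an interval of length $2\delta$ is $(\pi/(2\delta))^2$), that a solution cannot vanish at both endpoints of any length-$2\delta$ subinterval; equivalently that $|f|$ stays bounded below by a fixed fraction of its sup on a central subinterval. Concretely, I would choose $x$ so that $|f(x)|$ is near its maximum, and use a Sturm-type comparison (or a direct ODE estimate using $\|f''\|\lesssim \sqrt\mu\|f\|$ pointwise bounds from the equation) to show $|f(y)|\geq \cos(\sqrt\mu(y-x))\,|f(x)|$, or a similar lower bound, on $[x-\delta,x+\delta]$ as long as $\sqrt\mu\delta<\pi/2$. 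The factor $\tan(\sqrt\mu\delta)$ on the left-hand side of the conclusion strongly suggests that integrating such a $\cos$ lower bound (whose reciprocal weight integrates to something involving $\tan$) is precisely the route, and the constant $36$ will be whatever this integration plus the Cauchy–Schwarz losses produce.

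**Main obstacle.** The hard part will be combining the two ingredients cleanly: the non-vanishing lower bound on $|f|$ is what lets us pass from a weighted estimate (with weight $|f|^2$) to the unweighted oscillation functional $\omega_0$, and getting the weights to cancel with the right explicit constant (yielding exactly $36\sqrt\mu\tan(\sqrt\mu\delta)$) requires care. I expect the delicate point is controlling the boundary terms in the integration by parts that converts $u$ into $\psi-c$ and simultaneously keeping the test function compatible with the comparison estimate; one likely picks the optimal constant $c$ to be a weighted average of $\psi$ against $|f|^2$, so that a cross term vanishes and only the $\omega_0$-type quantity remains. I would first verify the comparison/non-vanishing lemma in the trivial case $u\equiv 0$ to fix the constants, then restore $u$ and track the error.
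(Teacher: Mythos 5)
Your first ingredient --- the non-vanishing/comparison bound $|f(y)|\geq\cos(\sqrt{\mu}\,(y-y_0))\,|f(y_0)|$ at a maximum point when $\sqrt{\mu}\,\delta<\pi/2$ --- is exactly the paper's intermediate result, so the intuition there is right. But the justification you offer for it would fail: from the equation, $f''=iuf-\mu\overline{f}$, so any pointwise bound $\|f''\|\lesssim\sqrt{\mu}\|f\|$ is false in general, since $u$ carries no a priori bound (in the applications $u$ is replaced by $ku/\nu$, with enormous amplitude, and the whole point of the lemma is uniformity in $u$). The paper's mechanism is that $u$ drops out of the \emph{real} part of the log-derivative (Riccati) equation: writing $f=e^{\rho+i\theta}$ on the maximal zero-free interval $(a,b)$ around $y_0$, the real part of $-g''-g'^2+iu=\mu e^{-2i\theta}$ gives $\rho''+\rho'^2+\mu=\theta'^2+2\mu\sin^2\theta\geq 0$, a differential inequality independent of $u$. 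Via $\rho_1=\arctan(\rho'/\sqrt{\mu})$ this yields both halves of the dichotomy: the zero-free interval has half-length at least $\pi/(2\sqrt{\mu})$ (whence the alternative $\sqrt{\mu}\geq\pi/(2\delta)$), together with the pointwise bounds $\cos((y-y_0)\sqrt{\mu})\leq e^{\rho(y)}$ and $|\rho'(y)|\leq\sqrt{\mu}\tan(|y-y_0|\sqrt{\mu})$. If you want your Sturm comparison to be honest, you must first observe this cancellation (equivalently, $|f|''+\mu|f|\geq0$ wherever $f\neq0$); you never articulate why $u$ disappears.

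The genuine gap is in your second ingredient, the passage to $\omega_0$. Pairing the equation with $\overline{f}$ times a cutoff and integrating by parts to trade $u=\psi'$ for $\psi-c$ produces, for each cutoff, a single scalar identity --- a bound on one linear functional $\int(\psi-c)\,w\,dy$ with a weight $w$ built from $(|f|^2\chi)'$. No lower bound on $|f|$ converts a family of such linear bounds into the required \emph{quadratic} bound $\int_{x-\delta}^{x+\delta}|\psi-c|^2\,dy\leq 36\sqrt{\mu}\tan(\sqrt{\mu}\,\delta)$; and trying to replace the weight $|f|^2$ by $1$ creates an error term $\int u\,(|f|^2-1)$ that is uncontrollable without a bound on $\|u\|_{L^\infty}$. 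What is missing is the pointwise identity from the \emph{imaginary} part of the Riccati equation, $u=\theta''+2\rho'\theta'-\mu\sin2\theta$, which upon integration from $y_0$, with the explicit choice $c=\psi(y_0)-\theta'(y_0)$ (not a weighted average), gives
\begin{equation*}
\psi(y)-c=\theta'(y)+\int_{y_0}^{y}\bigl(2\rho'\theta'-\mu\sin2\theta\bigr)\,dz ,
\end{equation*}
i.e.\ $\psi-c$ is, up to a controlled error, the phase derivative $\theta'=\mathrm{Im}(f'/f)$. The real part then supplies exactly the two estimates that close the argument: $\int_{y_0-\delta}^{y_0+\delta}\theta'^2\,dz\leq\int(\rho''+\rho'^2+\mu)\,dz\leq 4\sqrt{\mu}\tan(\delta\sqrt{\mu})$, and the pointwise bound $|\psi(y)-\theta'(y)-c|\leq 2\sqrt{2\mu}\tan(|y-y_0|\sqrt{\mu})$, whose combination yields the constant $36$. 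Without this representation of $\psi-c$ through $\theta'$, your energy/weighted scheme is too weak by one degree (linear versus quadratic in $\psi-c$) and cannot reach the stated conclusion.
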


\begin{proof}If $\mu=0$, then $0={\rm Re}\langle Hf, f\rangle_{L^2}=\|\partial_y f\|_{L^2}^2,$ $f$ is constant, $u\equiv0, $ $\psi$ is a constant, $\omega_0(\delta,u)=0,$ and the result is true. Now we assume $\mu>0.$

In this case, we have $f\in C^2(\mathbb{T})$. We can normalize $\|f\|_{L^{\infty}}=1,$ and assume $|f(y_0)|=1$ for some $y_0\in\mathbb{R}.$
Set $a=\sup\{y|f(y)=0,y<y_0\},\ b=\inf\{y|f(y)=0,y>y_0\}$. Then $-\infty\leq a<y_0<b\leq +\infty $ and $f\neq 0$ in $(a,b)$, $f(a)=0$ if $a>-\infty,$ $f(b)=0$ if $b<+\infty.$ Now we can find $g\in C^2(a,b)$ such that $ f=e^g$ in $(a,b)$, then $Hf=(-g''-g'^2+iu)f$. Set $g=\rho+i\theta,\ \rho,\theta\in\R$, then $\overline{f}=e^{-2i\theta}f$ and the equation $Hf=\mu\overline{f} $ in $(a,b)$ can be written as $-g''-g'^2+iu=\mu e^{-2i\theta}$ or $$-\rho''-\rho'^2+\theta'^2=\mu \cos{2\theta},\ \ \ -\theta''-2\rho'\theta'+u=-\mu \sin{2\theta}.$$ As $\|f\|_{L^{\infty}}=1,\ |f(y_0)|=1,$ we have $\rho\leq 0$ in $(a,b)$, $\rho(y_0)=0,$ and $\rho'(y_0)=0. $

We first give the lower bound of $y_0-a$ and $b-y_0.$ By the standard theory of
ODE, if $a>-\infty$, then $\lim\limits_{y\to a_+}\rho(y)=-\infty$, while if $b<+\infty$, then $\lim\limits_{y\to b_-}\rho(y)=-\infty$.

Since $\rho''+\rho'^2+\mu=\theta'^2+2\mu(\sin{\theta})^2\geq 0, $ set $\rho_1=\arctan\dfrac{\rho'}{\sqrt{\mu}}$, then $\rho_1(y_0)=0,\ \dfrac{\rho_1'}{\sqrt{\mu}}+1=\dfrac{\rho''}{\rho'^2+\mu}+1\geq 0,$ and $\rho_1(z)\geq\rho_1(y)+(y-z)\sqrt{\mu} $ for $a<y<z<b.$

For $y\in (a,b)$ if $b<y+(\rho_1(y)+\pi/2)/\sqrt{\mu},$ then $$\inf\limits_{(y,b)}\rho_1\geq\rho_1(y)+(y-b)\sqrt{\mu}>-\dfrac{\pi}{2},\ \inf\limits_{(y,b)}\rho'>-\infty,\ \inf\limits_{(y,b)}\rho>-\infty,$$ a contradiction, therefore $b\geq y+(\rho_1(y)+\pi/2)/\sqrt{\mu}$. Similarly, $a\leq y+(\rho_1(y)-\pi/2)/\sqrt{\mu}.$ In particular, $a\leq y_0-\dfrac{\pi}{2\sqrt{\mu}}<y_0+\dfrac{\pi}{2\sqrt{\mu}}\leq b.$

Now we estimate $|\rho'(y)|.$ For $y\in (a,b)$, we have $y_1=y+\rho_1(y)/\sqrt{\mu}\in (a,b),$ \begin{align*}\rho(y_1)-\rho(y)=\int_{y}^{y_1}\rho'(z)dz=\int_{y}^{y_1}\sqrt{\mu}\tan\rho_1(z)dz\\
\geq\int_{y}^{y_1}\sqrt{\mu}\tan(\rho_1(y)+(y-z)\sqrt{\mu})dz{=\ln\frac{1}{\cos\rho_1(y)}}.\end{align*}
Since $\rho(y_1)\leq 0$, we have $e^{\rho(y)}\leq\cos\rho_1(y)$. On the other hand, if $|y-y_0|<\dfrac{\pi}{2\sqrt{\mu}}$, then \begin{align*}\rho(y_0)-\rho(y)=\int_{y}^{y_0}\rho'(z)dz=\int_{y}^{y_0}\sqrt{\mu}\tan\rho_1(z)dz\\
\leq\int_{y}^{y_0}\sqrt{\mu}\tan(\rho_1(y_0)+(y_0-z)\sqrt{\mu})dz=\ln\frac{1}{\cos((y-y_0)\sqrt{\mu})}.\end{align*}
Here we used $\rho_1(y_0)=\rho(y_0)=0$. Therefore, $\cos((y-y_0)\sqrt{\mu})\leq e^{\rho(y)}\leq\cos\rho_1(y). $ Since $(y-y_0)\sqrt{\mu},\ \rho_1(y)\in (-\pi/2,\pi/2) $ , we have $|y-y_0|\sqrt{\mu}\geq|\rho_1(y)| $ and $|\rho'(y)|=\sqrt{\mu}\tan|\rho_1(y)|\leq \sqrt{\mu}\tan(|y-y_0|\sqrt{\mu}).$ Now if $\sqrt{\mu}<\dfrac{\pi}{2\d}$, then $\d<\dfrac{\pi}{2\sqrt{\mu}}$ and
\begin{align*}&\int_{y_0-\d}^{y_0+\d}|\theta'(z)|^2dz\leq\int_{y_0-\d}^{y_0+\d}(\rho''+\rho'^2+\mu)dz\\
\leq&\rho'|_{y_0-\d}^{y_0+\d}+\int_{y_0-\d}^{y_0+\d}({\mu}\tan^2(|z-y_0|\sqrt{\mu})+\mu)dz\\ \leq&2\sqrt{\mu}\tan(\d\sqrt{\mu})+2\sqrt{\mu}\tan(\d\sqrt{\mu})=4\sqrt{\mu}\tan(\d\sqrt{\mu}).\end{align*}
Here we used $ |\rho'(y)|\leq \sqrt{\mu}\tan(|y-y_0|\sqrt{\mu})=\sqrt{\mu}\tan(\d\sqrt{\mu})$ for $y=y_0\pm\d$.

Now we estimate $\omega_0(\delta,u).$ Since $u(y)=\psi'(y)$, we have $-\theta''-2\rho'\theta'+\psi'=-\mu \sin{2\theta},$ and $ \psi(y)-\theta'(y)-c=\int_{y_0}^y(2\rho'\theta'-\mu \sin{2\theta})dz$ for $c=\psi(y_0)-\theta'(y_0).$ If $y_0<y<y_0+\d$, then
\begin{align*}&|\psi(y)-\theta'(y)-c|\leq\int_{y_0}^y\frac{2\rho'^2+\theta'^2+2\mu}{\sqrt{2}}dz
\leq\int_{y_0}^y\frac{2\rho'^2+(\rho''+\rho'^2+\mu)+2\mu}{\sqrt{2}}dz\\
=&\frac{\rho'|_{y_0}^{y}}{\sqrt{2}}+\frac{3}{\sqrt{2}}\int_{y_0}^y(\rho'^2+\mu)dz\leq \frac{\rho'({y})}{\sqrt{2}} +\frac{3}{\sqrt{2}}\int_{y_0}^y({\mu}\tan^2(|z-y_0|\sqrt{\mu})+\mu)dz\\ \leq&\sqrt{\mu/2}\tan((y-y_0)\sqrt{\mu})+3\sqrt{\mu/2}\tan((y-y_0)\sqrt{\mu})=2\sqrt{2\mu}\tan((y-y_0)\sqrt{\mu}).\end{align*}
Similarly, if $y_0-\d<y<y_0$, then $|\psi(y)-\theta'(y)-c|\leq 2\sqrt{2\mu}\tan(|y-y_0|\sqrt{\mu}).$ Therefore,
 \begin{align*}&\omega_0(\delta,u)\leq \int_{y_0-\d}^{y_0+\d}|\psi(y)-c|^2dy\leq3\int_{y_0-\d}^{y_0+\d}|\theta'(z)|^2dz+\frac{3}{2}\int_{y_0-\d}^{y_0+\d}
|\psi(y)-\theta'(y)-c|^2dy\\
\leq&3\cdot4\sqrt{\mu}\tan(\d\sqrt{\mu})+\frac{3}{2}\int_{y_0-\d}^{y_0+\d}8\mu\tan^2(|y-y_0|\sqrt{\mu})dy\\ =&12\sqrt{\mu}\tan(\d\sqrt{\mu})+24(\sqrt{\mu}\tan(\d\sqrt{\mu})-\mu\d)\leq36\sqrt{\mu}\tan(\d\sqrt{\mu}).\end{align*}

This completes the proof.\end{proof}

Set $\varphi:[0,\pi/2)\to[0,+\infty),\ \varphi(x)=36x\tan x$. Then $\varphi $ is a one to one increasing function and we denote $\varphi^{-1}:[0,+\infty)\to[0,\pi/2)$ to be the inverse function.

\begin{lemma}\label{Lem: 4}
For  $\d>0$,  we have
\beno
\Psi_0(u)\geq(\varphi^{-1}(\delta\omega_0(\delta,u))/{\d})^2,\quad \Psi_1(u)\geq(\varphi^{-1}(\delta\omega_1(\delta,u))/{\d})^2.
\eeno
\end{lemma}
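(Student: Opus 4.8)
The plan is to establish the bound on $\Psi_0$ first, and then deduce the bound on $\Psi_1$ from it by a translation argument, exploiting the identities $\Psi_1(u)=\inf_{\lambda\in\R}\Psi_0(u-\lambda)$ and $\omega_1(\delta,u)=\inf_{c\in\R}\omega_0(\delta,u-c)$ recorded in Section 1. The two preceding lemmas already carry out all the analytic work: Lemma \ref{Lem: 2} guarantees that the infimum defining $\mu=\Psi_0(u)$ is attained by some $0\neq f\in D(H)$ with $Hf=\mu\overline f$, and Lemma \ref{Lem: 3} converts this into the dichotomy $\sqrt\mu\geq\frac{\pi}{2\delta}$ or $36\sqrt\mu\tan(\sqrt\mu\,\delta)\geq\omega_0(\delta,u)$. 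What remains is essentially bookkeeping with the auxiliary function $\varphi(x)=36x\tan x$.

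For the first inequality I set $\mu=\Psi_0(u)$ and observe that, whenever $\sqrt\mu\,\delta<\pi/2$, one has $\varphi(\sqrt\mu\,\delta)=36\sqrt\mu\,\delta\tan(\sqrt\mu\,\delta)=\delta\bigl(36\sqrt\mu\tan(\sqrt\mu\,\delta)\bigr)$, so $\varphi$ packages exactly the quantity appearing in Lemma \ref{Lem: 3}. I would then split into the two cases of the dichotomy. In the case $\sqrt\mu\,\delta\geq\pi/2$ the conclusion is immediate: since $\varphi^{-1}$ takes values in $[0,\pi/2)$, we have $\sqrt\mu\,\delta\geq\pi/2>\varphi^{-1}\bigl(\delta\omega_0(\delta,u)\bigr)$. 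In the remaining case $\sqrt\mu\,\delta<\pi/2$, Lemma \ref{Lem: 3} gives $\varphi(\sqrt\mu\,\delta)\geq\delta\omega_0(\delta,u)$, and because $\varphi$ is a strictly increasing bijection of $[0,\pi/2)$ onto $[0,+\infty)$ I may apply $\varphi^{-1}$ to obtain $\sqrt\mu\,\delta\geq\varphi^{-1}\bigl(\delta\omega_0(\delta,u)\bigr)$. In either case $\sqrt{\Psi_0(u)}\geq\varphi^{-1}\bigl(\delta\omega_0(\delta,u)\bigr)/\delta$, which squares to the claimed bound.

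For the second inequality I would apply the first one with $u$ replaced by $u-\lambda$ for each $\lambda\in\R$; the operator merely becomes $H-i\lambda=-\partial_y^2+i(u-\lambda)$, which is again m-accretive, so Lemmas \ref{Lem: 2} and \ref{Lem: 3} apply verbatim and yield $\Psi_0(u-\lambda)\geq\bigl(\varphi^{-1}(\delta\omega_0(\delta,u-\lambda))/\delta\bigr)^2$. Taking the infimum over $\lambda$ and using that the map $x\mapsto\bigl(\varphi^{-1}(\delta x)/\delta\bigr)^2$ is continuous and increasing, hence commutes with the infimum, I get $\Psi_1(u)=\inf_\lambda\Psi_0(u-\lambda)\geq\bigl(\varphi^{-1}(\delta\inf_\lambda\omega_0(\delta,u-\lambda))/\delta\bigr)^2=\bigl(\varphi^{-1}(\delta\omega_1(\delta,u))/\delta\bigr)^2$, the last equality being the definition of $\omega_1$.

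The computation is routine once the two earlier lemmas are granted; the only points needing a little care are the boundary case $\sqrt\mu\,\delta\geq\pi/2$, where one must not attempt to invert $\varphi$ but instead use that $\varphi^{-1}$ is bounded by $\pi/2$, and the justification that the monotone continuous map $\varphi^{-1}$ commutes with the infimum over $\lambda$ in passing from $\omega_0$ to $\omega_1$. Neither is a genuine obstacle: the substance of the argument lives entirely in Lemmas \ref{Lem: 2} and \ref{Lem: 3}, so the hard part has already been done.
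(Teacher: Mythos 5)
Your proof is correct and follows essentially the same route as the paper: invoke Lemma \ref{Lem: 2} to obtain a minimizer, convert the dichotomy of Lemma \ref{Lem: 3} into $\sqrt{\mu}\,\delta\geq\varphi^{-1}(\delta\omega_0(\delta,u))$ using that $\varphi$ is an increasing bijection of $[0,\pi/2)$ onto $[0,+\infty)$ with $\varphi^{-1}<\pi/2$, then deduce the $\Psi_1$ bound by taking the infimum over $\lambda$ and commuting it with the increasing continuous map $x\mapsto\bigl(\varphi^{-1}(\delta x)/\delta\bigr)^2$. The only difference is presentational: you spell out the boundary case $\sqrt{\mu}\,\delta\geq\pi/2$ and the interchange of infimum with $\varphi^{-1}$, which the paper treats in a single line.
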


\begin{proof} Let $\mu=\Psi_0(u),$ by Lemma \ref{Lem: 2}, there exists $0\neq f\in {D}(H)$ such that
 $Hf=\mu\overline{f}$. By Lemma \ref{Lem: 3}, we have $\sqrt{\mu}\geq\dfrac{\pi}{2\d}$ or $36\sqrt{\mu}\tan(\sqrt{\mu}\d)\geq \omega_0(\delta,u).$ Therefore, $\sqrt{\mu}\d\geq\dfrac{\pi}{2}$ or $\varphi(\sqrt{\mu}\d)=36\sqrt{\mu}\d\tan(\sqrt{\mu}\d)\geq \d\omega_0(\delta,u).$

 Since $\varphi(x)=36x\tan x$ is a one to one increasing function, we have $\sqrt{\mu}\d\geq\dfrac{\pi}{2}$ or $\sqrt{\mu}\d\geq \varphi^{-1}( \d\omega_0(\delta,u)).$ As $ \varphi^{-1}( \d\omega_0(\delta,u))<\pi/2, $ $\sqrt{\mu}\d\geq \varphi^{-1}( \d\omega_0(\delta,u))$ is always true, and $\sqrt{\mu}\geq \varphi^{-1}( \d\omega_0(\delta,u))/\d,\ \Psi_0(u)= (\sqrt{\mu})^2\geq (\varphi^{-1}( \d\omega_0(\delta,u))/\d)^2.$ Now we have\begin{align*}\Psi_1(u)=\inf_{\l\in\R}\Psi_0(u-\l)\geq \inf_{\l\in\R}(\varphi^{-1}( \d\omega_0(\delta,u-\l))/\d)^2\\=(\varphi^{-1}( \d\inf_{\l\in\R}\omega_0(\delta,u-\l))/\d)^2=(\varphi^{-1}( \d\omega_1(\delta,u))/\d)^2.\end{align*}This completes the proof.
 \end{proof}

 \section{Enhanced dissipation for shear flows}

As in \cite{BZ}, let $L_{k,\nu}=iku-\nu(\partial_y^2-|k|^2),\ R_{k,\nu}=iku-\nu\partial_y^2,$ be the linear operators associated with the $k$-th Fourier projections of \eqref{eq1} and \eqref{eq2}, associated to the linear semigroups $$e^{-tL_{k,\nu}}=S_{\nu}(t)P_k,\ \ e^{-tR_{k,\nu}}=R_{\nu}(t)P_k.$$
For fixed $\nu$ and $k$, $L_{k,\nu}$ and $R_{k,\nu}$ are m-accretive. Notice that $R_{k,\nu}=\nu H_{(ku/\nu)},\ L_{k,\nu}\\ =R_{k,\nu}+\nu|k|^2$. By Theorem \ref{thm:GP}, we have
\begin{align*}\|e^{-tL_{k,\nu}}\|_{L^2\to L^2}&=\|e^{-\nu|k|^2t}e^{-tR_{k,\nu}}\|_{L^2\to L^2}\leq \|e^{-tR_{k,\nu}}\|_{L^2\to L^2}=\|e^{-t\nu H_{(ku/\nu)}}\|_{L^2\to L^2}\\ &\leq e^{-t\nu \Psi(H_{(ku/\nu)})+\pi/2}=e^{-t\nu \Psi_1(ku/\nu)+\pi/2},\ \ \forall\ t\geq0.
\end{align*}

Let us first give the decay rate in terms of $\omega_1(\delta,u)$.
\begin{theorem}\label{thm1}
For $\a>0,\ u\in C(\mathbb{T},\R)$, assume that $\omega_1(\delta,u)\geq C_1\d^{2\a+3} $ for $\d\in(0,1).$ Here $C_1$ is a positive constant.
Then there exist positive constants $ \varepsilon,\ C$ such that for every $\nu>0$ and every integer $k\neq 0$ satisfying $\nu|k|^{-1}\leq 1/2,$ \begin{align}\label{S1}\|S_{\nu}(t)P_k\|_{L^2\to L^2} \leq Ce^{-\varepsilon\widetilde{\la}_{\nu,k}t},\ \|R_{\nu}(t)P_k\|_{L^2\to L^2} \leq Ce^{-\varepsilon\widetilde{\la}_{\nu,k}t},\ \forall\ t\geq 0,\end{align} where $P_k$ denotes the projection to the $k$-th Fourier mode in $x$ and $\widetilde{\la}_{\nu,k}=\nu^{\frac{\a}{\a+2}}|k|^{\frac{2}{\a+2}}$ is the decay rate.
\end{theorem}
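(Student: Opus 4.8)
The plan is to combine the semigroup bound derived immediately before the statement with the resolvent lower bound of Lemma \ref{Lem: 4}, and then to optimize over the free parameter $\d$. Recall that for every integer $k\neq0$ one has already established
\[
\|S_{\nu}(t)P_k\|_{L^2\to L^2}\le\|R_{\nu}(t)P_k\|_{L^2\to L^2}=\|e^{-t\nu H_{(ku/\nu)}}\|_{L^2\to L^2}\le e^{-t\nu\Psi_1(ku/\nu)+\pi/2},
\]
so both semigroups satisfy \eqref{S1} the moment I can produce a bound $\nu\Psi_1(ku/\nu)\ge\varepsilon\widetilde{\la}_{\nu,k}$ with $\varepsilon>0$ independent of $\nu,k$; the overall constant is then simply $C=e^{\pi/2}$. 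Thus the entire theorem reduces to a uniform lower bound on $\nu\Psi_1(ku/\nu)$.

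The key step is the scaling identity $\omega_1(\d,ku/\nu)=(k/\nu)^2\,\omega_1(\d,u)$, which holds because writing $u=\psi'$ gives $ku/\nu=\big((k/\nu)\psi\big)'$ and the free constants $c_1,c_2$ in the definition of $\omega_1$ absorb the factor $k/\nu$. Combined with the hypothesis $\omega_1(\d,u)\ge C_1\d^{2\a+3}$ for $\d\in(0,1)$, this yields $\d\,\omega_1(\d,ku/\nu)\ge(k/\nu)^2C_1\,\d^{2\a+4}$. Since $\varphi^{-1}$ is increasing, Lemma \ref{Lem: 4} then delivers, for every $\d\in(0,1)$,
\[
\Psi_1(ku/\nu)\ge\big(\varphi^{-1}\big((k/\nu)^2C_1\,\d^{2\a+4}\big)/\d\big)^2.
\]

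To finish I would fix a constant $c_0\in(0,\pi/2)$ and choose $\d=\d_{\nu,k}$ so that the argument of $\varphi^{-1}$ equals $\varphi(c_0)$, that is $\d^{2\a+4}=\varphi(c_0)/\big((k/\nu)^2C_1\big)$; then $\varphi^{-1}(\cdots)=c_0$ and the estimate collapses to $\Psi_1(ku/\nu)\ge c_0^2\,\d^{-2}=c_0^2\big((k/\nu)^2C_1/\varphi(c_0)\big)^{1/(\a+2)}$. Multiplying by $\nu$ and using $\big((k/\nu)^2\big)^{1/(\a+2)}=|k|^{2/(\a+2)}\nu^{-2/(\a+2)}$ together with $1-\tfrac{2}{\a+2}=\tfrac{\a}{\a+2}$ gives precisely $\nu\Psi_1(ku/\nu)\ge\varepsilon\,\nu^{\a/(\a+2)}|k|^{2/(\a+2)}=\varepsilon\widetilde{\la}_{\nu,k}$ with $\varepsilon=c_0^2\big(C_1/\varphi(c_0)\big)^{1/(\a+2)}$. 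The one point requiring care, and the main obstacle, is that Lemma \ref{Lem: 4} is only applicable at $\d\in(0,1)$, so I must check that $\d_{\nu,k}$ lands in this range uniformly. This is exactly where the standing assumption $\nu|k|^{-1}\le1/2$, i.e. $(k/\nu)^2\ge4$, is used: it forces $\d_{\nu,k}^{2\a+4}\le\varphi(c_0)/(4C_1)$, so taking $c_0$ small enough that $\varphi(c_0)\le4C_1$ (possible since $\varphi(c_0)=36c_0\tan c_0\to0$ as $c_0\to0$) guarantees $\d_{\nu,k}\in(0,1)$ for all admissible $\nu,k$. Shrinking $c_0$ only decreases $\varepsilon$ while keeping it strictly positive, which is all that is needed.
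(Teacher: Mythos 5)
Your proposal is correct and follows essentially the same route as the paper: both reduce \eqref{S1} via the Gearhart--Pr\"uss bound $\|S_\nu(t)P_k\|\le\|R_\nu(t)P_k\|\le e^{-t\nu\Psi_1(ku/\nu)+\pi/2}$ to a uniform lower bound $\nu\Psi_1(ku/\nu)\ge\varepsilon\widetilde{\la}_{\nu,k}$, obtained from the degree-two homogeneity of $\omega_1$ and Lemma \ref{Lem: 4} with $\d$ comparable to $(\nu/|k|)^{1/(\a+2)}$; the paper takes $\d=(\nu/\widetilde{\la}_{\nu,k})^{1/2}=(\nu/|k|)^{1/(\a+2)}$ outright, so that $\d\,\omega_1(\d,ku/\nu)\ge C_1$ and $\varepsilon=(\varphi^{-1}(C_1))^2$, whereas you rescale $\d$ by a constant so the argument of $\varphi^{-1}$ equals $\varphi(c_0)$ --- a difference of normalization only, with the same $\varepsilon$ up to constants. (One small correction: the restriction $\d\in(0,1)$ comes from the hypothesis on $\omega_1$, not from Lemma \ref{Lem: 4}, which is stated for all $\d>0$; with the paper's choice of $\d$ the condition $\d\in(0,1)$ is automatic from $\nu|k|^{-1}\le 1/2<1$, so your extra care with $\varphi(c_0)\le 4C_1$ is an artifact of your rescaling, and you should take the inequality strict to keep $\d<1$.)
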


\begin{proof}
By the definition, we have $\omega_1(\delta,u)\geq 0 $ is increasing with respect to $\d$ and homogeneous of degree $2$ with respect to $u,\ i.e.\ \omega_1(\delta,Au)=A^2\omega_1(\delta,u)$ for every constant $A\in\R.$ Since $\widetilde{\la}_{\nu,k}=\nu^{\frac{\a}{\a+2}}|k|^{\frac{2}{\a+2}},$ we take $\d=(\nu/\widetilde{\la}_{\nu,k})^{1/2}=(\nu/|k|)^{\frac{1}{\a+2}}\in (0,1)$. Then \begin{align*}
\d\omega_1(\delta,ku/\nu)=\d(|k|/\nu)^2\omega_1(\delta,u)=\d\d^{-2(\a+2)}\omega_1(\delta,u)
 =\omega_1(\delta,u)/\d^{2\a+3}\geq C_1,
 \end{align*}
 By Lemma \ref{Lem: 4}, for $\d=(\nu/\widetilde{\la}_{\nu,k})^{1/2}>0$, we have $\Psi_1(ku/\nu)\geq (\varphi^{-1}( \d\omega_1(\delta,ku/\nu))/\d)^2,$ and
 \begin{align*}\nu \Psi_{1}(ku/\nu)\geq \nu(\varphi^{-1}( \d\omega_1(\delta,ku/\nu))/\d)^2\geq \nu(\varphi^{-1}( C_1)/\d)^2\\ = \nu(\varphi^{-1}( C_1))^2/((\nu/\widetilde{\la}_{\nu,k})^{1/2})^2=(\varphi^{-1}( C_1))^2\widetilde{\la}_{\nu,k}.
 \end{align*}
 Thus,
 \begin{align*}\|S_{\nu}(t)P_k\|_{L^2\to L^2}&=\|e^{-tL_{k,\nu}}\|_{L^2\to L^2}\leq\|R_{\nu}(t)P_k\|_{L^2\to L^2}= \|e^{-tR_{k,\nu}}\|_{L^2\to L^2}\\ &\leq e^{-t\nu \Psi_1(ku/\nu)+\pi/2}\leq e^{-\varepsilon \widetilde{\la}_{\nu,k}t+\pi/2},\ \ \forall\ t\geq0,
 \end{align*}
 where $\varepsilon=(\varphi^{-1}( C_1))^2>0$ is a constant.
 \end{proof}

 The following lemma gives the lower bound of $\omega_1(\delta,u)$ when $u(y)$ is a Weierstrass function.

  \begin{lemma}\label{Lem: 8}
  If $u(y)=\sum\limits_{n=1}^{\infty}a_n\sin(3^ny)$ is a Weierstrass function,\  $a_n\in\R\setminus\{0\},$ and $1\leq |a_n|/|a_{n+1}|\leq 3,$ $ m\in\mathbb{Z},\ m>0,$ then $\omega_1(3^{-m}\pi,u)\geq C^{-1}3^{-3m}a_m^2. $
  \end{lemma}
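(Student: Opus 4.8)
The plan is to bound $\omega_1(3^{-m}\pi,u)$ from below interval by interval. Writing $\d=3^{-m}\pi$, $I=I_x=[x-\d,x+\d]$, and $u=\psi'$ with $\psi(y)=-\sum_{n\ge1}\frac{a_n}{3^n}\cos(3^ny)$ (modulo a constant, which is irrelevant), the inner infimum over $c_1,c_2$ in the definition of $\omega_1$ is exactly the squared $L^2(I)$-distance of $\psi$ to the two-dimensional space $V$ of affine functions $c_1+c_2y$; subtracting $c_2y$ is the same as subtracting the constant $c_2$ from $u$. Thus with $P$ the orthogonal projection of $L^2(I)$ onto $V$ and $\Phi:=(1-P)\psi$, I must show $\|\Phi\|_{L^2(I)}^2\ge C^{-1}3^{-3m}a_m^2$ uniformly in $x$, and then take the infimum over $x$. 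I will split $\psi=\sum_n\psi_n$, $\psi_n=-\frac{a_n}{3^n}\cos(3^ny)$, set $\Phi_n=(1-P)\psi_n$, and try to show that the single mode $n=m$ already forces $\|\Phi\|$ to be large.

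The length of $I$ equals exactly one period $2\pi3^{-m}$ of $\cos(3^my)$, and this is the source of all the good structure. First, the target mode is bounded below independently of the phase: writing $t=y-x$, $\phi=3^mx$ and using that $\cos(3^mt)\perp V$ while $\sin(3^mt)-\beta t=(1-P)\sin(3^mt)$, a direct computation gives $\|\Phi_m\|_{L^2(I)}^2=\frac{a_m^2}{3^{2m}}\,\d\,(1-\tfrac{6}{\pi^2}\sin^2\phi)\ge(\pi-\tfrac6\pi)\,a_m^2\,3^{-3m}$. Second, and crucially, for every $n>m$ the ratio $3^{n-m}$ is a positive integer, so all the relevant overlap integrals over $I$ reduce to $\sin(\pi3^{n-m})=0$; hence $\langle\psi_m,\psi_n\rangle_{L^2(I)}=0$ exactly for $n>m$. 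Consequently $\langle\Phi_m,\Phi_n\rangle=\langle\psi_m,(1-P)\psi_n\rangle$ for $n>m$ reduces to a product of the (small) affine projections $P\psi_m,P\psi_n$, which I will estimate by oscillatory bounds such as $\int_{-\d}^{\d}t\sin(3^nt)\,dt=O(3^{-n-m})$; summing the geometric tail shows $\sum_{n>m}|\langle\Phi_m,\Phi_n\rangle|=O(a_m^23^{-3m})$ with a constant that can be made small, so the high modes cannot cancel the signal.

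The real work is the low modes $n<m$, where $\langle\psi_m,\psi_n\rangle_{L^2(I)}\ne0$ and no exact orthogonality is available. The plan is to prove a uniform almost-orthogonality: after the rescaling $s=3^m t\in[-\pi,\pi]$, the mode $\psi_m$ becomes a frequency-one oscillation, while each $\psi_n$ ($n<m$) becomes a frequency-$3^{n-m}\le\tfrac13$ function, i.e.\ a slowly varying profile that, on $[-\pi,\pi]$, is close to a low-degree polynomial. I will show that $\Phi_m/\|\Phi_m\|$ makes a definite angle with the span of $\{\Phi_n:n\ne m\}$, uniformly in $m$ and in the phases; together with the previous control of the $n>m$ block this yields $\|\Phi\|=\|\sum_n\Phi_n\|\ge c\,\|\Phi_m\|\gtrsim 3^{-3m/2}|a_m|$. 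Concretely I would start from the exact formula $\int_{-\d}^{\d}\cos(3^mt)\cos(3^nt)\,dt=\frac{2\cdot3^n\sin(\pi3^{n-m})}{3^{2m}-3^{2n}}$, which gives $\langle\psi_m,\psi_n\rangle_{L^2(I)}=O(a_ma_n3^{n-4m})$, and combine these Gram interactions with the lacunary control of the coefficients to bound the off-diagonal block by a fixed multiple of the diagonal.

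The main obstacle is precisely this uniform angle bound in the extreme regime $|a_n|/|a_{n+1}|\to3$. There the amplitude $|a_n|/3^n$ of $\psi_n$ in $\psi$ grows geometrically as $n$ decreases, so $\|\Phi_n\|$ is comparable to $\|\Phi_m\|$ for all $n\le m$ simultaneously, and the crude triangle inequality applied to $\sum_{n<m}\langle\Phi_m,\Phi_n\rangle$ loses an unwanted factor $m$ — each geometric smallness $\sin(\pi3^{n-m})\approx\pi3^{n-m}$ being exactly compensated by the geometric growth of $|a_n|$. Overcoming this requires a genuine Cotlar/Ingham-type almost-orthogonality adapted to the shrinking interval rather than a term-by-term bound: one must use that the hypothesis $|a_n|/|a_{n+1}|\le3$ matches the lacunarity base $3$, so that even in the worst case the low modes span a space whose overlap with the frequency-$3^m$ mode is bounded by a constant strictly below one. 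Granting this, the infimum over $x$ gives $\omega_1(3^{-m}\pi,u)\ge C^{-1}3^{-3m}a_m^2$, as claimed; the homogeneity $\omega_1(\d,Au)=A^2\omega_1(\d,u)$ recorded in the proof of Theorem \ref{thm1} is consistent with the appearance of $a_m^2$.
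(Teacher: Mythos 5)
Your preparatory steps are sound and partly coincide with the paper's: the lower bound $\|\Phi_m\|_{L^2(I)}^2\geq(\pi-6/\pi)a_m^2 3^{-3m}$ is correct, and the exact orthogonality of $\{\cos(3^ny)\}_{n\geq m}$ on an interval whose length is a multiple of the relevant half-periods is precisely the orthogonality the paper also exploits. But the proof stalls exactly where you flag it: the low-mode block $n<m$. You correctly compute that in the extremal regime $|a_n|=3^{m-n}|a_m|$ each interaction $|\langle\Phi_m,\Phi_n\rangle|$ is comparable to $\|\Phi_m\|^2\sim a_m^23^{-3m}$, so term-by-term summation loses a factor $m$; the proposed remedy --- a lower bound on the angle between $\Phi_m$ and ${\rm span}\{\Phi_n:n\neq m\}$, uniform in $m$ and in the phases --- is introduced with ``Granting this'' and never proved. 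Worse, as formulated it is false: the rescaled low modes are $\cos(3^{-j}s+\phi_j)$, $j=1,\dots,m-1$, on $[-\pi,\pi]$, and since divided differences in the frequency parameter of $e^{i\omega s}$ generate $(is)^k e^{i\xi s}/k!$, the closed linear span of infinitely many frequencies accumulating at $0$ contains all polynomials and is therefore dense in $L^2(-\pi,\pi)$. Hence with \emph{unconstrained} coefficients the low-frequency span approximates $\cos s$ arbitrarily well as $m\to\infty$, and no uniform angle exists. Any true version of your key lemma must quantify how large the coefficients must be to achieve a given approximation and play that against the cap $|a_n|/3^n\leq 9^{m-n}|a_m|/3^m$ coming from $|a_n|/|a_{n+1}|\leq 3$; this quantitative inverse estimate is the entire difficulty, and nothing in the proposal supplies it.

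The paper avoids the Gram-matrix scheme altogether with one device you should note: the third finite difference $\triangle_h^3$ with step $h=3^{-m-1}\pi$. Since $\triangle_h^3$ annihilates affine functions, $\inf_x\int_{x-3h}^{x}(\triangle_h^3\psi)^2\,dy\leq C\,\omega_1(3h,u)$, so a pointwise-in-$x$ lower bound on $\triangle_h^3\psi$ lower-bounds $\omega_1$ directly --- no projection $P$ is ever needed. The operator multiplies mode $n$ by $\bigl(2\sin(3^nh/2)\bigr)^3$: for $n<m$ this is $\leq(3^nh)^3\sim 27^{-(m-n)}$, which beats the worst-case amplitude growth $|a_n|/3^n\leq 9^{m-n}|a_m|/3^m$ with a geometric margin $3^{-(m-n)}$ to spare, so the whole low block is controlled in $L^\infty$ by $\frac{|a_m|}{3^m}\frac{\pi^3}{54}$ --- small constant, no almost-orthogonality, no factor $m$. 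For $n\geq m$ the multiplier is a harmless constant ($1$ at $n=m$, $\pm8$ for $n>m$ since $3^{n-m-1}$ is odd), the surviving modes stay exactly orthogonal on length $3h=3^{-m}\pi$, and the triangle inequality finishes. Observe that the order three is forced: a second difference also kills affine functions but damps mode $n$ only by $(3^nh)^2\sim 9^{n-m}$, which exactly cancels the $9^{m-n}$ amplitude growth and reproduces your factor-$m$ loss; the cubic gain is the minimal one that wins. If you wish to salvage your projection approach, you would in effect have to test $\Phi$ against a fixed multiplier vanishing to second order on low frequencies --- which is the difference trick in disguise.
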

  \begin{proof}
  We can take $\psi(y)=-\sum\limits_{n=1}^{\infty}\dfrac{a_n}{3^n}\cos(3^ny)$ such that $u(y)=\psi'(y)$. We introduce the difference operator $ \triangle_hf(y)=f(y)-f(y+h),\ \triangle_h^3f(y)=f(y)-3f(y+h)+3f(y+2h)-f(y+3h).$ Noticing that $\triangle_h^3(e^{iny})=e^{iny}(1-e^{inh})^3=ie^{in(y+\frac{3}{2}h)}(2\sin\frac{nh}{2})^3,$ we have $\triangle_h^3\cos({ny})=-\sin(n(y+\frac{3}{2}h))(2\sin\frac{nh}{2})^3,\ \triangle_h^3\psi(y)=\sum\limits_{n=1}^{\infty}\dfrac{a_n}{3^n}\sin(3^n(y+\frac{3}{2}h))(2\sin\frac{3^nh}{2})^3.$
  Noting that for $x,c_1,c_2\in \R,\ h>0,$ we have $ \triangle_h^3(\psi(y)-c_1-c_2y)= \triangle_h^3\psi(y), $ and
  \begin{align*}
  \int_{x-3h}^x(\triangle_h^3\psi(y))^2dy=\int_{x-3h}^x(\triangle_h^3(\psi(y)-c_1-c_2y))^2dy\leq C\int_{x-3h}^{x+3h}(\psi(y)-c_1-c_2y)^2dy,
  \end{align*}
  thus, $\inf\limits_{x\in\R}\int_{x-3h}^x(\triangle_h^3\psi(y))^2dy\leq C\omega_1(3h,u). $ Now we take $h=3^{-m-1}\pi,$ then we can write $\triangle_h^3\psi(y)=f_1(y)+f_2(y) $ with $f_1(y)=\sum\limits_{n=1}^{m-1}\dfrac{a_n}{3^n}\sin(3^n(y+\frac{3}{2}h))(2\sin\frac{3^nh}{2})^3$ and  \begin{align*}&f_2(y)=\sum\limits_{n=m}^{+\infty}\dfrac{a_n}{3^n}\sin\left(3^n\left(y+\frac{3}{2}h\right)\right)\left(2\sin\frac{3^nh}{2}\right)^3
 \\&=\sum\limits_{n=m}^{+\infty}\dfrac{a_n}{3^n}\sin\left(3^ny+\frac{3^{n-m}\pi}{2}\right)\left(2\sin\frac{3^{n-m-1}\pi}{2}\right)^3
 \\&=\dfrac{a_m}{3^m}\cos(3^m y)-\sum\limits_{n=m+1}^{+\infty}\dfrac{8a_n}{3^n}\cos(3^n y).\end{align*}
  Thanks to $1\leq |a_n|/|a_{n+1}|\leq 3,$ we have $|a_n|\leq |a_m|3^{m-n}$ for $1\leq n<m$ and\begin{align*}&|f_1(y)|\leq\sum\limits_{n=1}^{m-1}\dfrac{|a_n|}{3^n}\left|2\sin\frac{3^nh}{2}\right|^3
 \leq\sum\limits_{n=1}^{m-1}\dfrac{|a_n|}{3^n}|{3^nh}|^3\leq\sum\limits_{n=1}^{m-1}\dfrac{|a_m|3^{m-n}}{3^n}|{3^nh}|^3
 \\&=|a_m|3^{m}\sum\limits_{n=1}^{m-1}3^nh^3\leq |a_m|3^{m}\frac{3^mh^3}{2}=|a_m|3^{2m}\frac{(3^{-m-1}\pi)^3}{2}=\dfrac{|a_m|}{3^{m}}\frac{\pi^3}{54}
 .\end{align*}
 Notice that the functions $\{\cos(3^n y)\}_{n\in\mathbb{Z},n\geq m} $ are othogonal in $L^2(x-3h,x)=L^2(x-3^{-m}\pi,x)$  for every $x\in\R,$ and $|a_n|\geq |a_m|3^{m-n}$ for $ n\geq m$. Then we have
 \begin{align*}&\|f_2\|_{L^2(x-3h,x)}^2=\dfrac{|a_m|^2}{3^{2m}}\|\cos(3^m y)\|_{L^2(x-3h,x)}^2+\sum\limits_{n=m+1}^{+\infty}\dfrac{(8|a_n|)^2}{3^{2n}}\|\cos(3^n y)\|_{L^2(x-3h,x)}^2
 \\&=\dfrac{|a_m|^2}{3^{2m}}\frac{3h}{2}+\sum\limits_{n=m+1}^{+\infty}\dfrac{(8|a_n|)^2}{3^{2n}}\frac{3h}{2}\geq \dfrac{|a_m|^2}{3^{2m}}\frac{3h}{2}+\sum\limits_{n=m+1}^{+\infty}\dfrac{(8|a_m|3^{m-n})^2}{3^{2n}}\frac{3h}{2}
\\&=\dfrac{|a_m|^2}{3^{2m}}\frac{3h}{2}\left(1+\sum\limits_{n=m+1}^{+\infty}\dfrac{8^2}{3^{4(n-m)}}\right)
=\dfrac{|a_m|^2}{3^{2m}}\frac{3h}{2}\frac{9}{5}
 .\end{align*}
 Therefore,
 \begin{align*}&\|\triangle_h^3\psi\|_{L^2(x-3h,x)}\geq \|f_2\|_{L^2(x-3h,x)}-\|f_1\|_{L^2(x-3h,x)}\geq\left(\dfrac{|a_m|^2}{3^{2m}}\frac{3h}{2}\frac{9}{5}\right)^{\frac{1}{2}}
 -\left\|\dfrac{|a_m|}{3^{m}}\frac{\pi^3}{54}\right\|_{L^2(x-3h,x)}\\&=\dfrac{|a_m|}{3^{m}}\left(\frac{3h}{2}\frac{9}{5}\right)^{\frac{1}{2}}
 -\dfrac{|a_m|}{3^{m}}\frac{\pi^3}{54}(3h)^{\frac{1}{2}}=\dfrac{|a_m|}{3^{m}}\left(\left(\frac{9}{10}\right)^{\frac{1}{2}}
 -\frac{\pi^3}{54}\right)(3h)^{\frac{1}{2}}\geq \dfrac{|a_m|}{3^{m}}\frac{3}{10}
 (3h)^{\frac{1}{2}}
 .\end{align*}
 Here we used $\left(\frac{9}{10}\right)^{\frac{1}{2}}
 -\frac{\pi^3}{54}\geq \frac{9}{10}-\frac{32}{54}\geq \frac{9}{10}-\frac{6}{10}=\frac{3}{10}. $
 Therefore,
 \begin{align*}&\inf_{x\in \R}\|\triangle_h^3\psi\|_{L^2(x-3h,x)}^2\geq \left(\dfrac{|a_m|}{3^{m}}\frac{3}{10}\right)^2
 (3h)=\left(\dfrac{|a_m|}{3^{m}}\frac{3}{10}\right)^23^{-m}\pi\geq \dfrac{|a_m|^2}{C3^{3m}}
 ,\end{align*} and \begin{align*}&\omega_1(3^{-m}\pi,u)=\omega_1(3h,u)\geq C^{-1}\inf_{x\in \R}\|\triangle_h^3\psi\|_{L^2(x-3h,x)}^2\geq C^{-1}3^{-3m}{|a_m|^2}
 .\end{align*}

 This completes the proof.\end{proof}

Now we are in a position to give some examples of shear flows that induce an enhanced
dissipation time-scale faster than $O(\nu^{-1/3})$.

\begin{lemma}\label{Lem: 9}
If $u(y)=\sum\limits_{n=1}^{\infty}a_n\sin(3^ny)$ is a Weierstrass function,\  $a_n\in\R,\ 3^{-n\a}\leq |a_n|\leq C_03^{-n\a}$ for some constants  $\a\in(0,1),\ C_0>1$ and $1\leq |a_n|/|a_{n+1}|\leq 3$, then there exist positive constants $ \varepsilon,\ C$ such that for every $\nu>0$ and every integer $k\neq 0$ satisfying $\nu|k|^{-1}\leq 1/2,$ \eqref{S1} holds for $\widetilde{\la}_{\nu,k}=\nu^{\frac{\a}{\a+2}}|k|^{\frac{2}{\a+2}}$. \end{lemma}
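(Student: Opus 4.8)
The plan is to deduce the result directly from Theorem \ref{thm1} together with Lemma \ref{Lem: 8}. Theorem \ref{thm1} already converts a lower bound of the form $\omega_1(\delta,u)\geq C_1\delta^{2\alpha+3}$, valid for all $\delta\in(0,1)$, into precisely the decay estimate \eqref{S1} with the claimed rate $\widetilde{\lambda}_{\nu,k}=\nu^{\frac{\alpha}{\alpha+2}}|k|^{\frac{2}{\alpha+2}}$. So the entire task reduces to verifying that hypothesis under the present assumptions on the coefficients $a_n$. I first note that the upper bound $|a_n|\leq C_0 3^{-n\alpha}$ with $\alpha>0$ guarantees $\sum_n|a_n|<\infty$, so that $u\in C(\mathbb{T},\R)$ and Theorem \ref{thm1} is indeed applicable.

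The lower bound at the discrete scales is immediate from Lemma \ref{Lem: 8}. Since $1\leq |a_n|/|a_{n+1}|\leq 3$ and the $a_n$ are nonzero, Lemma \ref{Lem: 8} gives $\omega_1(3^{-m}\pi,u)\geq C^{-1}3^{-3m}a_m^2$ for every integer $m\geq 1$. Feeding in the coefficient lower bound $|a_m|\geq 3^{-m\alpha}$ yields
$$\omega_1(3^{-m}\pi,u)\geq C^{-1}3^{-3m}3^{-2m\alpha}=C^{-1}3^{-m(2\alpha+3)},$$
which is exactly the desired estimate evaluated at $\delta=3^{-m}\pi$, with constant $C^{-1}\pi^{-(2\alpha+3)}$.

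The remaining point, and the only place requiring an argument, is to upgrade this from the lacunary scales $3^{-m}\pi$ to all $\delta\in(0,1)$. Here I use the monotonicity of $\delta\mapsto\omega_1(\delta,u)$ recorded in the proof of Theorem \ref{thm1}. Given $\delta\in(0,1)$, since $\pi/3>1$ there is an integer $m\geq 1$ with $3^{-m-1}\pi<\delta\leq 3^{-m}\pi$. Monotonicity together with Lemma \ref{Lem: 8} applied at index $m+1$ then gives
$$\omega_1(\delta,u)\geq\omega_1(3^{-(m+1)}\pi,u)\geq C^{-1}3^{-3(m+1)}3^{-2(m+1)\alpha}=C^{-1}3^{-(m+1)(2\alpha+3)}.$$
Using $3^{-(m+1)}\geq\delta/(3\pi)$, which follows from $\delta\leq 3^{-m}\pi$, this becomes $\omega_1(\delta,u)\geq C_1\delta^{2\alpha+3}$ with $C_1=C^{-1}(3\pi)^{-(2\alpha+3)}>0$. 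Applying Theorem \ref{thm1} with this $C_1$ then delivers \eqref{S1} with the stated rate.

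I expect no serious obstacle: the substantive analytic work has already been done in Lemma \ref{Lem: 8}, the third-difference lower bound on $\omega_1$, and in Theorem \ref{thm1}, the resolvent-to-semigroup passage. The main care is the bookkeeping in the scale interpolation and checking that the constants remain positive and independent of $\nu,k$; the use of monotonicity to bridge the lacunary scales $3^{-m}\pi$ and a general $\delta$ is the one step that is more than a direct substitution.
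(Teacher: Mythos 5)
Your proposal is correct and follows essentially the same route as the paper: the paper likewise picks $m$ with $3^{-m}\pi\leq\delta<3^{-m+1}\pi$, uses the monotonicity of $\delta\mapsto\omega_1(\delta,u)$ together with Lemma \ref{Lem: 8} and $|a_m|\geq 3^{-m\alpha}$ to get $\omega_1(\delta,u)\geq C^{-1}(\delta/(3\pi))^{2\alpha+3}\geq C^{-1}\delta^{2\alpha+3}$, and then invokes Theorem \ref{thm1}. Your one-index shift in the scale interpolation and the explicit check that $u\in C(\mathbb{T},\R)$ are immaterial differences.
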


\begin{proof}
 For every $\d\in(0,1)$, there exists $ m\in\mathbb{Z},\ m>0,$ such that $3^{-m}\pi\leq \d<3^{-m+1}\pi,$ as $|a_m|\geq 3^{-m\a}$. By Lemma \ref{Lem: 8}, we have $\omega_1(\d,u)\geq \omega_1(3^{-m}\pi,u)\geq C^{-1}3^{-3m}{|a_m|^2}\geq C^{-1}3^{-3m}{|3^{-m\a}|^2}=C^{-1}3^{-(3+2\a)m}\geq C^{-1}(\d/(3\pi))^{3+2\a}\geq C^{-1}\d^{3+2\a}. $ Now the result follows from Theorem \ref{thm1}.
 \end{proof}

 \begin{lemma}\label{Lem: 10}If $u(y)=\sum\limits_{n=1}^{\infty}a_n\sin(3^ny)$ is a Weierstrass function,\  $a_n\in\R,\ n^{-\a}\leq |a_n|\leq C_0n^{-\a}$ for some constants $\a\in(1,2),\ C_0>1$ and $1\leq |a_n|/|a_{n+1}|\leq 3$,
 then there exist positive constants $ \varepsilon,\ C$ such that for every $\nu>0$ and every integer $k\neq 0$ satisfying $\nu|k|^{-1}\leq 1/2,$ \eqref{S1} holds for $\widetilde{\la}_{\nu,k}=|k|(\ln (|k|/\nu))^{-\a}$.
 \end{lemma}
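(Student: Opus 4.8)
The plan is to follow the same strategy as in Theorem \ref{thm1} and Lemma \ref{Lem: 9}, converting a lower bound on $\omega_1(\d,u)$ into a lower bound on $\nu\Psi_1(ku/\nu)$ through Lemma \ref{Lem: 4} and the semigroup bound established at the beginning of this section. The essential difference from Lemma \ref{Lem: 9} is that the coefficients now decay only polynomially in $n$ (like $n^{-\a}$) rather than geometrically, so that Lemma \ref{Lem: 8} produces a lower bound on $\omega_1(\d,u)$ of the form $\d^3$ corrected by a logarithmic factor rather than a clean power $\d^{2\a+3}$. Consequently Theorem \ref{thm1} cannot be invoked verbatim; instead I would repeat its $\d$-optimization by hand with the logarithmically corrected weight, which is exactly what forces the logarithmic time-scale $\widetilde{\la}_{\nu,k}=|k|(\ln(|k|/\nu))^{-\a}$.

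First I would convert Lemma \ref{Lem: 8} into a bound valid for all $\d\in(0,1)$. Given such a $\d$, choose the integer $m\geq 1$ with $3^{-m}\pi\leq\d<3^{-m+1}\pi$; then $3^{-m}$ is comparable to $\d$, while $m$ is comparable to $1+\ln(1/\d)$. Since $\omega_1(\cdot,u)$ is increasing and $|a_m|\geq m^{-\a}$, Lemma \ref{Lem: 8} yields
\beno
\omega_1(\d,u)\geq\omega_1(3^{-m}\pi,u)\geq C^{-1}3^{-3m}m^{-2\a}\geq C^{-1}\d^3(1+\ln(1/\d))^{-2\a}.
\eeno
This is the analogue of the estimate $\omega_1(\d,u)\geq C^{-1}\d^{3+2\a}$ used in Lemma \ref{Lem: 9}, now carrying the extra logarithmic factor.

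Next I would carry out the optimization over $\d$. Guided by $\widetilde{\la}_{\nu,k}=|k|(\ln(|k|/\nu))^{-\a}$, set $\d=(\nu/\widetilde{\la}_{\nu,k})^{1/2}=(\nu/|k|)^{1/2}(\ln(|k|/\nu))^{\a/2}$. A short computation shows that $\ln(1/\d)=\tfrac12\ln(|k|/\nu)-\tfrac\a2\ln\ln(|k|/\nu)$ stays strictly positive, so $\d\in(0,1)$, and that the ratio $\ln(|k|/\nu)/(1+\ln(1/\d))$ is bounded below by a positive constant $c_0=c_0(\a)$ for all $|k|/\nu\geq2$. Using the degree-two homogeneity $\omega_1(\d,ku/\nu)=(|k|/\nu)^2\omega_1(\d,u)$ together with the previous step, I would then obtain
\beno
\d\,\omega_1(\d,ku/\nu)\geq C^{-1}(|k|/\nu)^2\d^4(1+\ln(1/\d))^{-2\a}=C^{-1}\Big(\tfrac{\ln(|k|/\nu)}{1+\ln(1/\d)}\Big)^{2\a}\geq C^{-1}c_0^{2\a}=:C_1>0.
\eeno
Lemma \ref{Lem: 4} then gives $\Psi_1(ku/\nu)\geq(\varphi^{-1}(C_1)/\d)^2$, whence, since $\nu/\d^2=\widetilde{\la}_{\nu,k}$ by the choice of $\d$,
\beno
\nu\Psi_1(ku/\nu)\geq(\varphi^{-1}(C_1))^2\,\nu/\d^2=(\varphi^{-1}(C_1))^2\,\widetilde{\la}_{\nu,k}.
\eeno
Feeding this into the bound $\|S_\nu(t)P_k\|\leq\|R_\nu(t)P_k\|\leq e^{-t\nu\Psi_1(ku/\nu)+\pi/2}$ from the start of this section yields \eqref{S1} with $\varepsilon=(\varphi^{-1}(C_1))^2$ and $C=e^{\pi/2}$.

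The step I expect to be the main obstacle is the bookkeeping of the logarithmic factor in the third paragraph: because $\d$ is itself chosen in terms of $\ln(|k|/\nu)$, the weight $(1+\ln(1/\d))^{-2\a}$ entering $\omega_1(\d,ku/\nu)$ is self-referential, and one must check that $\ln(|k|/\nu)$ and $1+\ln(1/\d)$ are comparable uniformly in the admissible range $|k|/\nu\geq2$, so that the argument of $\varphi^{-1}$ stays bounded below by a fixed positive constant. The delicate regime is where $|k|/\nu$ is close to $2$ and the various logarithms are of order one; there the ``$+1$'' in $1+\ln(1/\d)$ keeps the denominator bounded away from zero and the argument still closes.
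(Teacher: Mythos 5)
Your proposal is correct and follows essentially the same route as the paper's proof: the same conversion of Lemma \ref{Lem: 8} into $\omega_1(\d,u)\geq C^{-1}\d^3(1-\ln\d)^{-2\a}$, the same choice $\d=(\nu/|k|)^{1/2}(\ln(|k|/\nu))^{\a/2}$, and the same comparability argument showing $1-\ln\d\leq C\ln(|k|/\nu)$ so that $\d\,\omega_1(\d,ku/\nu)\geq C_1$ before applying Lemma \ref{Lem: 4} and the semigroup bound. The ``self-referential'' logarithm issue you flag is handled in the paper exactly as you propose, via $\d\geq C^{-1}(\nu/|k|)^{1/2}$ and hence $\ln\d\geq\tfrac12\ln(\nu/|k|)-C$.
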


 \begin{proof}
 For every $\d\in(0,1)$, there exists $ m\in\mathbb{Z},\ m>0,$ such that $3^{-m}\pi\leq \d<3^{-m+1}\pi,$ thus $m\leq \log_3(\pi/\d)+1\leq C(1-\ln \d).$ As $|a_m|\geq m^{-\a}$, By Lemma \ref{Lem: 8}, we have $\omega_1(\d,u)\geq \omega_1(3^{-m}\pi,u)\geq C^{-1}3^{-3m}{|a_m|^2}\geq C^{-1}3^{-3m}m^{-2\a}\geq C^{-1}\d^{3}(1-\ln \d)^{-2\a}.$  Since $\widetilde{\la}_{\nu,k}=\\|k|(\ln (|k|/\nu))^{-\a},$ we take $\d=(\nu/\widetilde{\la}_{\nu,k})^{1/2}=(\nu/|k|)^{\frac{1}{2}}(\ln (|k|/\nu))^{\frac{\a}{2}}\in (0,1), $ then
 \begin{align*}&\d\omega_1(\delta,ku/\nu)=\d(|k|/\nu)^2\omega_1(\delta,u)=\d(\d^{-2}(\ln (|k|/\nu))^{\a})^2\omega_1(\delta,u)\\ \geq& C^{-1}\d(\d^{-2}(\ln (|k|/\nu))^{\a})^2\d^{3}(1-\ln \d)^{-2\a}
 =C^{-1}(\ln (|k|/\nu))^{2\a}(1-\ln \d)^{-2\a}.
 \end{align*}
 We also have $\d\geq C^{-1}(\nu/|k|)^{\frac{1}{2}},$ $\ln \d\geq (1/2)\ln(\nu/|k|)-C,$ $ 1-\ln \d\leq C-(1/2)\ln(\nu/|k|)=C+(1/2)\ln(|k|/\nu)\leq C\ln(|k|/\nu),$ which implies that $\d\omega_1(\delta,ku/\nu)\geq C_1 $ for an absolute constant $C_1>0.$

 By Lemma \ref{Lem: 4}, for $\d=(\nu/\widetilde{\la}_{\nu,k})^{1/2}>0$, we have $\Psi_1(ku/\nu)\geq (\varphi^{-1}( \d\omega_1(\delta,ku/\nu))/\d)^2,$ and
 \begin{align*}\nu \Psi_{1}(ku/\nu)\geq \nu(\varphi^{-1}( \d\omega_1(\delta,ku/\nu))/\d)^2\geq \nu(\varphi^{-1}( C_1)/\d)^2\\ = \nu(\varphi^{-1}( C_1))^2/((\nu/\widetilde{\la}_{\nu,k})^{1/2})^2=(\varphi^{-1}( C_1))^2\widetilde{\la}_{\nu,k}.
 \end{align*}
 Thus,
 \begin{align*}\|S_{\nu}(t)P_k\|_{L^2\to L^2}&=\|e^{-tL_{k,\nu}}\|_{L^2\to L^2}\leq\|R_{\nu}(t)P_k\|_{L^2\to L^2}= \|e^{-tR_{k,\nu}}\|_{L^2\to L^2}\\ &\leq e^{-t\nu \Psi_1(ku/\nu)+\pi/2}\leq e^{-\varepsilon \widetilde{\la}_{\nu,k}t+\pi/2},\ \ \forall\ t\geq0,
 \end{align*}
 where $ \varepsilon=(\varphi^{-1}( C_1))^2>0$ is a constant. This completes the proof.\end{proof}

 \section*{Acknowledgement}The author would like to thank the
professor Zhifei Zhang for many valuable  suggestions.

\end{CJK*}

\end{document}